\newcommand{\fixed@sra}{$\vrule height 2\fontdimen22\textfont2 width 0pt\shortuparrow$}
\newcommand{\newsearrow}{%
  {\!\mathrel{\text{\rotatebox[origin=c]{\numexpr 5*45}{\fixed@sra}}\!}}
}
\renewcommand{\P}{\mathbb{P}}
\def\A{\mathtt{A}}
\def\U{\mathbb{U}}
\def\N{\mathbb{N}}
\def\X{\mathbb{X}}
\def\x{\mathbf{x}}
\def\e{{\rm e}}
\def\d{{\rm d}}
\newcommand{\Q}{\mathbb{Q}}
\newcommand{\R}{\mathbb{R}}
\newtheorem{theorem}{Theorem}[section]
\newtheorem{proposition}[theorem]{Proposition}
\newtheorem{lemma}[theorem]{Lemma}
\newtheorem{corollary}[theorem]{Corollary}
\theoremstyle{definition}
\newtheorem{remark}[theorem]{Remark}
\def\f{\mathcal{F}}
\def\build#1_#2^#3{\mathrel{
\mathop{\kern 0pt#1}\limits_{#2}^{#3}}}
\title{\vspace{-2cm}\textsc{On conditioning a  self-similar \\ growth-fragmentation 
by its  intrinsic area } \vspace{0.5cm}}
\date{}
\DeclareSymbolFont{extraup}{U}{zavm}{v}{n}
\DeclareMathSymbol{\varheart}{\mathalpha}{extraup}{86}
\DeclareMathSymbol{\vardiamond}{\mathalpha}{extraup}{87}
\renewcommand*{\@fnsymbol}[1]{\ensuremath{\ifcase#1\or \spadesuit \or \varheart \or \vardiamond \or \clubsuit\or \or
   \mathsection\or \mathparagraph\or \|\or **\or \dagger\dagger
   \or \ddagger\ddagger \else\@ctrerr\fi}}
\author{Jean Bertoin \thanks{Universit\"at Z\"urich.\hfill  \texttt{jean.bertoin@math.uzh.ch}} \qquad  Nicolas Curien \thanks{Universit\'e Paris-Sud.\hfill  \texttt{nicolas.curien@gmail.com}}  \qquad Igor Kortchemski \thanks{CNRS \& CMAP, \'Ecole polytechnique. \hfill \texttt{igor.kortchemski@math.cnrs.fr}}}
\date{}
\DeclareSymbolFont{extraup}{U}{zavm}{v}{n}
\DeclareMathSymbol{\varheart}{\mathalpha}{extraup}{86}
\DeclareMathSymbol{\vardiamond}{\mathalpha}{extraup}{87}
\renewcommand*{\@fnsymbol}[1]{\ensuremath{\ifcase#1\or \spadesuit \or \varheart \or \vardiamond \or \clubsuit\or \or
   \mathsection\or \mathparagraph\or \|\or **\or \dagger\dagger
   \or \ddagger\ddagger \else\@ctrerr\fi}}
\begin{document}
\normalem
\maketitle

\begin{abstract}  
The genealogical structure of self-similar growth-fragmentations can be described in terms of a branching random walk. The so-called intrinsic area $\A$  arises  in this setting as the terminal value of a remarkable additive martingale. Motivated by connections with some models of random planar geometry, the purpose of this work is to investigate the effect of conditioning a self-similar growth-fragmentation on its intrinsic area. The distribution of $\A$ satisfies a useful smoothing transform which  enables us to establish the existence of a regular density $a$ and to determine the asymptotic behavior of $a(r)$ as $r\to \infty$ (this can be seen as a local version of Kesten-Grincevi\v cius-Goldie theorem's for random affine fixed point equations in a particular setting).  In turn, this yields a family of martingales from which the formal conditioning on $\A=r$ can be realized by probability tilting. We point at a limit theorem for
the conditional distribution given $\A=r$ as $r\to \infty$, and also observe that such  conditioning still makes sense under the so-called canonical measure for which the growth-fragmentation starts from $0$. 

 \end{abstract}

\medskip

\noindent \emph{\textbf{Keywords:} Growth-fragmentation; branching process; self-similarity; smoothing transform; intrinsic martingale.}

\medskip

\noindent \emph{\textbf{AMS subject classifications:}}   60G18; 60J80

\section{Introduction}

A Markovian growth-fragmentation is a Crump-Mode-Jagers type branching process which 
can be thought of as a model describing masses of individuals in a family of living cells. These evolve independently one from the other, and the dynamics of the mass of a typical cell are governed by a Markov process on $\R_+$. Each  negative jump-time for the mass is interpreted as a birth event, in the sense that it is the time at which a daughter cell is born, whose initial mass is precisely given by the absolute size of the jump (so that conservation of masses holds at birth events). 
When those dynamics further enjoy self-similarity, the process that records masses  of cells at birth given their generations is a branching random walk. Under fairly general assumptions, this naturally yields a remarkable martingale, whose terminal value $\A$ is called the intrinsic area of the growth-fragmentation, see Section 2 in \cite{BBCK}.
The intrinsic area $\A$ is a fundamental random variable which notably appears in a variety of limit theorems for self-similar growth-fragmentations, see Dadoun \cite{Dadoun}; we also refer to the well-known contributions  \cite{Jagers, JN84, Nerman}
 by Jagers and Nerman  for closely related works where akin intrinsic martingales now are determined by the so-called Malthusian parameter.

\paragraph{Motivations.} The purpose of the present work is to investigate the conditional versions of such self-similar growth-fragmentation processes given $\A$. One of our motivations comes from recent connections \cite{BBCK, BCK, LGR, MillerS} between growth-fragmentations and random planar geometry.  In these connections,  growth-fragmentations with a given intrinsic area correspond, intuitively speaking,  to random surfaces with a given ``area". Indeed, in \cite{BCK}, we have considered cycles obtained by slicing at all heights random Boltzmann triangulations with a simple boundary, and we have established a functional invariance principle for the lengths of these cycles,
appropriately rescaled, as the size of the boundary grows. The limiting process is described using a self-similar growth-fragmentation process with explicit parameters, and roughly speaking encodes the breadth-first search of the Brownian disk (as considered in \cite{MillerS}), and the intrinsic area of the growth-fragmentations corresponds to the ``area" of the Brownian disk. See \cite[Section 6]{BBCK} for the more general case of stable Boltzmann planar maps.

\paragraph{Regularity of the density of the intrinsic area.} Of course, the study of conditioning a growth-fragmentation on the value of its intrinsic area $\A$ requires first investigating the distribution of $\A$.
In the setting of branching random walks, distributions of terminal values of  Biggins' additive martingales are usually not known explicitly, and there is a vast literature about their properties that is based on the so-called smoothing transform. 
We refer notably the treatise \cite{BDM} and the recent works \cite{DaMent, Ment} in which many more references can be found. 

We shall consider the framework where the dynamics describing masses of individuals is given by a non-negative self-similar Markov process with no positive jumps,  and we denote by $\kappa: \R_+ \to (-\infty, \infty]$ the so-called associated cumulant function (see \eqref{eqkappa} for a precise definition), which is a convex function with $\lim_{q\to\infty} \kappa(q) =\infty$. Throughout this work we make the fundamental assumption that
Cram\'er's condition is in force:
$$\text{ there exist $0<\omega_-<\omega_+<\infty$ such that $\kappa(\omega_{\pm})=0$ and $\kappa'(\omega_-)>-\infty$.
 }
$$
Also, as in \cite{BBCK} we suppose that $\kappa$ in finite in a neighborhood of $\omega_+$. Specifying to our case
general results due to Liu \cite{Liu1, Liu2}, we shall first establish the existence of a regular density $a$ for the intrinsic area $\A$ under ${\mathcal P}_1$, which is the law of the system starting from one particle of size $1$ (see Section \ref{sec:notation} for precise definitions, and in particular \eqref{E:defA} for the definition of $\A$). 

\begin{theorem}\label{thm:intro1}  The law of $\A$ 
under ${\mathcal P}_1$  is absolutely continuous. More precisely, there exists  $a\in {\mathcal C}^{\infty}_0(\R_+^*)$ (i.e. for any $n\geq 0$,  the derivative of order $n$, $a^{(n)}$, is a continuous function on $(0,\infty)$ which vanishes both at $0$ and at $\infty$)  such that
$${\mathcal P}_1(\A\in\d r) = a(r) \d r.$$
\end{theorem}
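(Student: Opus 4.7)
The strategy is to identify the law of $\A$ with a fixed point of a smoothing transform and then specialize the general results of Liu \cite{Liu1, Liu2}. The first step would be to derive the smoothing equation from the branching and self-similar structure of the growth-fragmentation: letting $(\x_i)_{i\geq 1}$ denote the point process of initial masses of the first-generation daughter cells under $\mathcal P_1$, and using that $\A$ under $\mathcal P_x$ has the law of $x^{\omega_+}\A$ under $\mathcal P_1$, one obtains
$$
\A \ \overset{(d)}{=}\ \sum_{i \geq 1} \x_i^{\omega_+}\, \A_i,
$$
where $(\A_i)_{i\geq 1}$ is an i.i.d.\ sequence of copies of $\A$ under $\mathcal P_1$, independent of $(\x_i)$. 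The distribution of $\A$ is thus a fixed point of the smoothing transform with non-negative multipliers $T_i := \x_i^{\omega_+}$.

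Next I would verify the hypotheses of Liu's theorems in this setting. Cram\'er's assumption $\kappa(\omega_\pm) = 0$ gives two exponents $\alpha = 1$ and $\alpha = \omega_-/\omega_+ \in (0,1)$ for which $\mathbb E\bigl[\sum_i T_i^\alpha\bigr] = 1$; this is the two-root regime in which Liu's smoothness results apply. The finiteness of $\kappa$ in a neighborhood of $\omega_+$ supplies an $L^p$-bound on the intrinsic martingale for some $p>1$, while $\kappa'(\omega_-) > -\infty$ provides the auxiliary integrability at the other Malthusian root. The remaining non-lattice assumption would be extracted from the Lamperti representation of the self-similar Markov process driving the cell masses: the non-degeneracy of the underlying L\'evy process prevents the law of $\log \x_1$ from being concentrated on an arithmetic progression. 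Once all these inputs are collected, Liu's theorems directly yield the existence of a density $a\in \mathcal C^\infty(0,\infty)$ whose derivatives of all orders vanish at $0^+$ and at $\infty$. Heuristically, iterating the smoothing equation above gives many convolutions of independent smooth factors, which bootstraps arbitrary regularity, while the two-sided Cram\'er control provides the tail decay needed to force the $a^{(n)}$ to vanish at both endpoints.

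The main obstacle will be the bookkeeping required to match our notation and hypotheses to the abstract statements of \cite{Liu1, Liu2}, whose framework is formulated for general point processes of multipliers. The subtlest verification is the non-lattice condition, since the joint law of $(\x_i)_{i\geq 1}$ is not available in closed form and must be read off from the Lamperti transform of the driving process. The vanishing of $a^{(n)}$ at infinity is probably the finest point of the statement, and it is plausible that a fully self-contained proof would instead derive it from the Kesten-Grincevi\v cius-Goldie tail analysis of $\A$ that the authors anticipate elsewhere in the paper.
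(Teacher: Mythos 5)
Your proposal gets the right general strategy -- recognize $\A$ as a fixed point of a smoothing transform and invoke Liu's absolute-continuity results -- and this is indeed what the paper does. However there are two concrete problems.

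First, you have the wrong exponent in the smoothing transform. The scaling relation for the intrinsic area is ${\mathcal E}_x(\A)=x^{\omega_-}$, and the intrinsic martingale is ${\mathcal M}^-(n)=\sum_{|u|=n+1}{\mathcal X}_u^{\omega_-}(0)$; hence $\A$ under ${\mathcal P}_x$ has the law of $x^{\omega_-}\A$ under ${\mathcal P}_1$ (not $x^{\omega_+}\A$), and the multipliers are $T_i=\x_i^{\omega_-}$ (not $\x_i^{\omega_+}$). With your choice the mean of the multipliers is still $1$, but the second Cram\'er root lies at $\omega_-/\omega_+<1$ instead of at $\omega_+/\omega_->1$; this corresponds to the wrong branch of the theory and does not match the Pareto tail $ {\mathcal P}_1(\A>r)\sim cr^{-\omega_+/\omega_-}$ that the paper records.

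Second, and more importantly, you never identify the hypothesis in Liu's Theorem 2.1 of \cite{Liu2} whose verification is the actual content of the proof. Liu's conditions (2.1)--(2.2) are immediate (no extinction because $\gamma_j>0$ a.s., and $\sum_j\gamma_j$ has unit mean by definition of the intrinsic martingale). The work is entirely in the first part of his condition (2.3): one must show that the \emph{largest} multiplier $\gamma_1=\sup_{t}|\Delta\e^{\xi}(t)|^{\omega_-}$ satisfies ${\mathcal E}_1(\gamma_1^{-b})<\infty$ for every $b>0$. This is what forces the characteristic function to decay faster than any power, whence the $\mathcal C^\infty_0$ density by Fourier inversion (the vanishing at infinity then comes for free from Riemann--Lebesgue, not from tail analysis as you suggest). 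The paper proves this negative-moment bound by a L\'evy--It\=o decomposition: let $T(y)$ be the first time $\xi$ makes a jump below $y<0$; it is exponential with rate $\bar\Lambda(y)$ and independent of the truncated L\'evy process $\xi_y$, and the bound $\gamma_1\geq(1-\e^y)^{\omega_-}\exp(\omega_-\xi_y(T(y)))$ reduces the question to a finite exponential moment for $\xi_y(T(y))$, which holds once $y$ is close enough to $0$ that $\bar\Lambda(y)>\Psi_y(-b\omega_-)$. Your sketch mentions none of this, and the items you propose to check ($L^p$-bound, $\kappa'(\omega_-)>-\infty$, non-lattice) are not the ones Liu's theorem requires and would not by themselves yield the conclusion.
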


We  then establish precise local estimates for the asymptotic behavior of $a(r)$ as $r\to \infty$ and we  also actually see that $a$ is actually everywhere positive on $(0,\infty)$.  We recall first from Lemma 2.3 in \cite{BBCK} the following consequence of a general estimate also due to Liu \cite{Liu1} (related to the famous  Kesten-Grincevi\v cius-Goldie theorem) for the tail distribution:
\begin{equation}\label{E:tailA}
{\mathcal P}_1(\A > r)  \quad \mathop{\sim}_{r\to \infty} \quad  cr^{-\omega_+/\omega_-},
\end{equation}
where $c\in(0,\infty)$ is some constant. 
The following result is a local and sharper version of \eqref{E:tailA}.
\clearpage

\begin{theorem}
\label{thm:intro2} The following assertions hold.
\begin{enumerate}
\item[(i)] We have $$a(r) \quad \mathop{\sim}_{r\to \infty} \quad c\frac{\omega_+}{\omega_-}  r^{ -1-\omega_+/\omega_-},$$
where $c$ is the constant appearing in \eqref{E:tailA}.
\item[(ii)] For every $r>0$, we have $a(r)>0$.
 \end{enumerate} 
\end{theorem}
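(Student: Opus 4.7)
The plan is to exploit the smoothing transform satisfied by $\A$. By the branching structure of the growth-fragmentation and the self-similar scaling of the intrinsic martingale, we have the distributional fixed-point identity
\[
\A \eqd \sum_{i \geq 1} X_i^{\omega_-} \A_i,
\]
where $(X_i)_{i \geq 1}$ enumerates the masses of first-generation daughter cells under $\mathcal{P}_1$ and $(\A_i)_{i\ge 1}$ is an independent sequence of iid copies of $\A$. Cram\'er's condition $\kappa(\omega_+)=0$ is precisely the condition that $\omega_+/\omega_-$ is the critical index of this smoothing transform.

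For part (i), my plan is to lift the Kesten-Grincevi\v cius-Goldie tail \eqref{E:tailA} to the density level via an implicit renewal argument. Setting $r = \e^s$ and $\psi(s) = \e^{s(1+\omega_+/\omega_-)} a(\e^s)$, the smoothing identity for $a$ translates---after exponentially tilting the law of $-\omega_- \log X_i$ by $\omega_+/\omega_-$, which is indeed a probability measure by $\kappa(\omega_+)=0$---into a renewal equation of the form
\[
\psi(s) = \int_{\R} \psi(s - t)\, \mu(\d t) + g(s),
\]
driven by a non-arithmetic probability distribution $\mu$ (non-arithmeticity being inherited from the smoothness of $a$ granted by Theorem~\ref{thm:intro1}). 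The tail asymptotic \eqref{E:tailA} is exactly the Blackwell-type conclusion for the corresponding survival equation. Upgrading to the density amounts to proving $\psi(s) \to c\,\omega_+/\omega_-$ as $s \to \infty$, which follows from the Smith key renewal theorem once one checks that $g$ is directly Riemann integrable. The smoothness of $a$ and its vanishing at $0$ and $\infty$ (Theorem~\ref{thm:intro1}), together with the finiteness of $\kappa$ near $\omega_+$, should provide the uniform bounds on $a$, $a'$ and the tail decay of $g$ needed here. The main technical obstacle is precisely this direct Riemann integrability check, which is the standard sticking point when passing from tail to density asymptotics via implicit renewal theory; bootstrapping from Theorem~\ref{thm:intro1} by iterating the smoothing transform once or twice should suffice.

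For part (ii), the strategy is to combine the smoothing transform with a convolution-smoothing argument. Since $a$ is a continuous density on $(0,\infty)$ with $\int a = 1$, the open set $U = \{r : a(r)>0\}$ contains some non-degenerate interval $(\alpha,\beta)$. Iterating the smoothing transform yields
\[
\A \eqd \sum_{u \in \mathcal{G}_n} X_u^{\omega_-} \A_u,
\]
where $\mathcal{G}_n$ denotes the $n$-th generation of the genealogical tree and $(\A_u)$ are iid copies of $\A$ independent of the masses $X_u$. Conditioning on the genealogical data, the law of the sum is the convolution of the dilated laws of the $\A_u$, and the $n$-fold convolution of functions positive on intervals is positive on the Minkowski sum of those intervals. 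For any fixed $r>0$, by choosing $n$ large and restricting to an event on which sufficiently many weights $X_u^{\omega_-}$ fall in a prescribed range, one arranges the Minkowski sum of the dilated supports $(\alpha X_u^{\omega_-}, \beta X_u^{\omega_-})$ to cover $r$, forcing the conditional density to be strictly positive at $r$. Taking expectations gives $a(r) > 0$. The main difficulty is verifying that the required configuration of first-generation masses has positive probability for every $r>0$, which should follow from the absence of degeneracies in the distribution of $(X_i)$ already used implicitly in Theorem~\ref{thm:intro1}.
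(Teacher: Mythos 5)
Your plan for part (i) is conceptually coherent but takes a genuinely different route from the paper's, and it heads straight into an obstacle the authors explicitly flag as open. The paper works with the size-biased variable $\A^-$ (the law of $\A$ under ${\mathcal Q}^-_1$, density $a^-(r)=ra(r)$, Lemma~\ref{LA}) and obtains, via the first-passage decomposition of the spectrally negative L\'evy process $\eta^-$, the random affine equation of Lemma~\ref{LB}: conditionally on $t^-(x)<\infty$, $\A^-\eqd\A^+(x)+\e^{x\omega_-}\A^-$. The crucial structural point is that the multiplier there takes only the two values $0$ and $\e^{x\omega_-}$; one can then literally differentiate \eqref{eq:tailA} in $x$ at $x=0$ to get the pointwise identity in Lemma~\ref{LD}(ii), $r\omega_- a^-(r)=\omega_\Delta\bigl({\mathcal P}(\A^->r)-{\mathcal P}(\vec\A>r)\bigr)-(\text{error})$, and the asymptotics drop out of Lemmas~\ref{LC} and~\ref{LD}(iii) with no renewal theory at all. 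Your approach — tilt at $\omega_+/\omega_-$, write a renewal equation for $\psi(s)=\e^{(1+\omega_+/\omega_-)s}a(\e^s)$, and invoke the key renewal theorem — is essentially the ``alternative approach'' the authors sketch in Section~\ref{sec:discussion}, where they push it to the \emph{lower} bound $\liminf r^{1+\omega_+/\omega_-}a^-(r)\geq c$ and then explicitly state that the matching upper bound ``needs more assumptions on $f$ and we shall leave this for further research.'' Your proposed fix — bootstrapping direct Riemann integrability by ``iterating the smoothing transform once or twice'' — is not credible as stated: the remainder in the density-level renewal equation for a smoothing transform with infinitely many summands is a difference of densities of infinite random sums, and $\mathcal C^\infty_0$-regularity of $a$ from Theorem~\ref{thm:intro1} gives pointwise bounds but no control of integrated tails uniformly in the spine weight, which is exactly what DRI requires. (Also a small slip: non-arithmeticity of the tilted step distribution comes from the assumption $\Lambda((-\infty,0))=\infty$ on the L\'evy measure, not from the smoothness of $a$.) So you should not present the DRI check as routine; it is the reason the authors chose a different proof.

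For part (ii), your Minkowski-sum plan is in the right spirit but is harder than it needs to be, and it does not use what you just proved. The paper's argument is: part (i) gives $a>0$ on $[R,\infty)$; for $r\in(0,R)$, condition the smoothing transform on $(\gamma_i)=(g_i)$, pick a \emph{single} index $j$ with $g_j<r/(2R)$ so that the dilated density $g_j^{-1}a(\cdot/g_j)$ is bounded below on $[r-\tfrac r2,\,r+1]$, use Biggins--Grey to ensure ${\mathcal P}(\sum_{i\neq j}g_i\A_i<r/2)>0$, and conclude with Fatou. Your iterated-support argument would instead require exhibiting, for every $r$, a positive-probability configuration of first-generation weights that places $r$ in the Minkowski sum of many dilated intervals; that is an extra combinatorial step the one-index argument sidesteps. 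It could be made to work — the essential inputs (positivity of $a$ on some set, and Biggins--Grey giving $\A$ positive mass near $0$) are the same — but the paper's version is both shorter and already leverages part (i).
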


The proof of Theorem \ref{thm:intro2} occupies a major part in this work. It will be achieved through the use of several recursive distributional equations related to the smoothing transform satisfied by $ \A$. Those fixed point equations are derived via first-passage times and path decompositions of L\'evy processes with no positive jumps, since the latter arise naturally in the description of the trajectories of cells via the well-known Lamperti's transformation for self-similar Markov processes; see Lemma \ref{LB}. The absence of positive jumps is a crucial assumption in several arguments of the proof of Theorem \ref{thm:intro2} but not elsewhere (in particular Theorem \ref{thm:intro1} holds for growth-fragmentations with  positive jumps).  However, Theorem \ref{thm:intro2} should hold in a greater generality, see in particular the growth-fragmentations with positive jumps considered in  Section 6 of \cite{BBCK} for which $ \A$ is a biased stable law. We refer to Section \ref{sec:discussion} for possible extensions. 

\paragraph{Conditioning on the intrinsic area.} Properties of the density $a$ are essential to 
construct a regular disintegration of self-similar growth-fragmentations with respect to their intrinsic areas (Corollary \ref{C6}). More precisely we shall identify  martingales given in terms of the density $a$ (Theorem \ref{T2}). This allows us to define a new probability distribution by tilting (see \eqref{E:Doob}), which roughly speaking amounts on conditioning on having a fixed intrinsic area. We shall then use this representation to study the asymptotic behavior of the conditional distributions,
first given $\A=r\to \infty$ (Corollary \ref{C5}), and then when the growth-fragmentation has initial mass zero (Lemma \ref{L1}), which requires
working under the so-called canonical measure.  
In the connection with random planar geometry, the first  setting amounts to working with certain marked random surfaces without a boundary. Indeed, a particular realization of the infinite measure ${\mathcal N}^-_0$ defined in Section \ref{ssec:condarea} corresponds to the so-called Boltzmann
measure on the space of Brownian map instances in \cite{MillerS}. It should also appear in the extension of the functional limit theorem of \cite{BCK} to Boltzmann triangulations without a boundary. This will be investigated in future work.

We stress that the construction of growth-fragmentations conditioned on its intrinsic area only uses Theorem \ref{thm:intro2} as an input and works mutatis mutandis in the presence of positive jumps. In particular, it applies to the processes considered in \cite[Section 6]{BBCK} related to stable Boltzmann maps.

 \paragraph{Outline.} The plan of the rest of this article is as follows. General notation and background on growth-fragmentations and cell systems are presented in Section \ref{sec:notation}. Section \ref{sec:area} is devoted to properties of the density of the intrinsic area, and the applications to conditioning on $\A=r$ are developed in Section \ref{sec:conditioning}.

\section{Notation, assumptions, and background}
\label{sec:notation}

We lift from \cite{BBCK} some notation and assumptions related to self-similar Markov processes, cell systems, growth-fragmentation processes,  etc.  and several related notions that will be used throughout this text. As we shall need to work with many different laws or measures, it will be convenient to adopt canonical notation, in the sense that we shall denote by $X, {\mathcal X}, \X$, etc. coordinate processes on some specific spaces of functions, which are then endowed with different probability (or even $\sigma$-finite) measures, $P,Q,{\mathcal P}, {\mathcal Q}, \P, \Q$, etc. As a consequence, different notation for mathematical expectations such as ${\mathcal E}$ and $E$ can be used, sometimes in the same formula. 

\paragraph{$\bullet$ Generic trajectories and rescaling.} 
In this work, we consider c\`adl\`ag functions $w: [0,\infty)\to [0,\infty)$ that are stopped at their first hitting time of $0$. That is, if we write $\zeta_w\coloneqq \inf\{t>0: w(t)=0
\}$, then $w(s)= 0$ for all $s\geq \zeta_w$. We call $\zeta_w$ the lifetime of $w$; we  stress that $\zeta_w$ may be infinite (i.e. $w(s)>0$ for all $s>0$) and also that 
$w$ may have a positive lifetime $\zeta_w\in(0,\infty]$  even when $w(0)=0$. 

We further impose the absence of positive jumps, i.e. $\Delta w(t)\coloneqq w(t)-w(t-)\leq 0$ for all $t\in(0,\zeta_w)$, and that $w$ never reaches the absorbing state $0$  by a jump (i.e.  $w(\zeta_w-)=0$ whenever $\zeta_w<\infty$).   We fix some deterministic procedure for enumerating the  absolute values of the jump
sizes. When $\zeta_w<\infty$, we usually decide to enumerate those in the non-decreasing order, but other procedures could also be used. When $w$ has only finitely many jumps, 
 we agree for definitiveness to complete this finite sequence by an infinite sequence of $0$'s. 
 
Functions $w$ as above will be often referred to as trajectories, and the space of trajectories is endowed with the Skorokhod $J_1$ topology. We denote by $X=(X(t))_{t\geq 0}$ the coordinate process, that is for every $t\geq 0$, $X(t)$ stands for the map $ w \mapsto w(t)$; we define similarly $\zeta=\zeta_X: w \mapsto \zeta_w$. We also write 
$(\f_t)_{t\geq 0}$ for the canonical filtration. 

Let $\alpha\in\R$ be some fixed real number. For every $b>0$, we define the rescaled trajectory
$$w^{(b)}: t\mapsto b w(b^{\alpha}t)\qquad \text{for all }t\geq 0.$$ We further use obvious notation such as 
\begin{equation}\label{E:scalenot}
X^{(b)}(t)=bX(b^{\alpha }t): w \mapsto w^{(b)}(t)\quad \text{and} \quad X^{(b)}=(X^{(b)}(t))_{t\geq 0}.
\end{equation}

\noindent $\bullet$   {\bf Self-similar Markov processes (SSMP).} 
We assume that the space of trajectories is endowed with a family of probability measures $(P_x)_{x\geq 0}$ under which the process
$X$ is both Feller and fulfills the scaling property. In particular, $P_x(X(0)=x)=1$, and for every $b>0$, 
\begin{equation}\label{scale}
\hbox{ the law of $X^{(b)}$ under
${P}_x$ is ${P}_{bx}$.}
\end{equation}

Recall our assumption  that no trajectory can reach $0$ by a jump.
A classical result due to Lamperti shows  that under the law $P_1$, the canonical process can be expressed in the form
$X(t)=\exp(\xi(\tau_{t}))$, with $\xi=(\xi(t))_{t\geq 0}$ is a spectrally negative L\'evy process (without killing) and 
$$\tau_t= \int_0^t X^{\alpha}(s)\d s \,,\qquad t\geq 0.
$$
Observe that in this framework, there are the identities
$$\tau_t= \,\inf\left\{r\geq0: \int_{0}^{r}\exp(-\alpha \xi(s))\d s\geq
t\right\} \quad \text{and} \quad \zeta=\int_0^{\infty} \e^{-\alpha \xi(s)}\d s\qquad \text{$P_1$-a.s.}$$
When $\alpha<0$, it is known that $P_1(\zeta<\infty)=1$ if and only if $\lim_{t\to \infty} \xi(t)=-\infty$ a.s., and $P_1(\zeta<\infty)=0$ otherwise (and vice-versa for $\alpha >0$).

The law of the spectrally negative L\'evy process  $\xi$ is determined by 
its Laplace exponent $\Psi: \R_+\to \R$ via
\begin{equation}\label{EqLap}
E(\exp(q\xi(t)))=\exp(t\Psi(q))\qquad \hbox{for all $t,q\geq 0$}.
\end{equation}
In turn, the Laplace exponent 
is given by the L\'evy-Khintchin formula
\begin{equation}\label{eqpsi}
\Psi(q) \coloneqq  \frac{1}{2}\sigma^2q^2 + dq +\int_{(-\infty,0)}\left( \e^{qy}-1+q(1-\e^y)\right) \Lambda(\d y)\,,\qquad q\geq 0,
\end{equation}
where $\sigma^2\geq 0$, $d\in\R$,  and $\Lambda$ is the L\'evy measure on $(-\infty,0)$ which fulfills $\int(1\wedge y^2)\Lambda(\d y)<\infty$. 
We further assume throughout this work that the L\'evy measure has an infinite total mass $\Lambda((-\infty,0))=\infty$;
as a consequence $\xi$ is non-lattice and has infinitely many jumps a.s.

In the sequel, we  say that  $X$ is a self-similar Markov process (SSMP) under the laws $(P_x)_{x\geq 0}$, with characteristics $(\Psi, \alpha)$ and refer to Chapter 13 of \cite{Kyp} for general background on this topic.

\paragraph{Cumulant, Cram\'er's condition, and tilted SSMP.}  We next define the so-called  cumulant function
$\kappa: \R_+ \to (-\infty, \infty]$  by
\begin{eqnarray}\label{eqkappa}
\kappa(q)&\coloneqq &\Psi(q)+\int_{(-\infty,0)}(1-\e^{y})^q\Lambda(\d y) \nonumber \\
&=& \frac{1}{2}\sigma^2q^2 + dq +\int_{(-\infty,0)}\left( \e^{qy}-1+q(1-\e^y)+ (1-\e^{y})^q\right) \Lambda(\d y). 
\end{eqnarray}
The cumulant is a convex function with $\lim_{q\to\infty} \kappa(q) =\infty$, and throughout this work we make the fundamental assumption that
Cram\'er's condition holds:
\begin{eqnarray}\label{E:Cramer}
\text{ there exist $0<\omega_-<\omega_+<\infty$ such that $\kappa(\omega_{\pm})=0$ and $\kappa'(\omega_-)>-\infty$,
 }
\end{eqnarray} and also that $\kappa( \omega_+ + \varepsilon)< \infty$ for some $ \varepsilon>0$.  We also write
$$\omega_{\Delta}\coloneqq  \omega_+- \omega_-.$$

The inequality $\Psi < \kappa$ combined with convexity shows that $\Psi(q)<0$ for $q\in[\omega_-, \omega_+]$, and this forces $\lim_{t\to \infty} \xi(t)=-\infty$ a.s.
Shifting the cumulant at each of those two roots yields two important functions, namely
\begin{equation} \label{E:Phipm}
\Phi^{\pm}(q)\coloneqq \kappa(q+\omega_{\pm}), \qquad q\geq 0,
\end{equation}
which can be viewed as the Laplace exponents of two L\'evy processes with no killing, say $\eta^{\pm}$. 
We then denote by $Q^{\pm}_x$ the distribution of the SSMP with characteristics $(\Phi^{\pm}, \alpha)$ started from $x>0$. Observe that, for $\alpha <0$,  $\zeta<\infty$ and $\lim_{t\to \zeta-}X(t)=0$ almost surely under $Q_x^-$, whereas
 $\zeta=\infty$ and $\lim_{t\to \infty}X(t)=\infty$ $Q_x^+$-a.s.

\paragraph{Cell systems.} A cell system is a collection of trajectories  that describe the sizes of a family of  cells as a function of their ages, and which  is endowed with a genealogical branching structure {\em \` a la} Crump-Mode-Jagers.
Roughly speaking, every jump of a trajectory during its lifetime is interpreted as a birth-event, in the sense that a daughter cell is born at the time of a jump of her mother and the size of the daughter at birth is simply given by the (absolute) size of that jump. Let us first formalize this notion. 

As usual, the genealogy is conveniently encoded by the Ulam tree ${\mathbb U}=\bigcup_{n\geq 0}\N^n$ with $\N=\{1,2,\ldots\}$; we shall use classical notation in this setting without recalling it. An element $u\in\U$ is thus a finite (possibly empty) sequence of positive integers, which we call an individual, or a cell.  A cell system 
is then defined as a family ${\mathcal X}\coloneqq ({\mathcal X}_u, u\in\U)$ indexed by the Ulam tree, where  
each ${\mathcal X}_u=\left({\mathcal X}_u(s)\right)_{s\geq 0}$ is a trajectory.

 By a slight abuse of notation, we write $\zeta_u=\inf\{s>0: {\mathcal X}_u(s)=0\}$ for the lifetime of the trajectory ${\mathcal X}_u$, i.e. the age at which the mass of the cell $u$ is absorbed at $0$. We view $\zeta_u$ as the age at death
 of the individual $u$.  
Recall that for every individual $u\in\U$, we enumerate the negative jumps of ${\mathcal X}_u$ according to some deterministic procedure, and this yields the sizes at birth ${\mathcal X}_{u1}(0)$, ${\mathcal X}_{u2}(0), \ldots$  of the children $u1, u2, \ldots$ of $u$. Working in absolute time, we also denote by $b_u$ the birth-time of the individual
$u$, so that $b_{\varnothing}=0$ and $b_{uj}-b_u$ is the instant at which the $j$-th jump of ${\mathcal X}_u$ occurs. Similarly, we write $d_u= b_u+\zeta_u$ for the death-time of the individual $u$, hence 
 $[b_u, d_u)$ is the time-interval during which this individual is alive.

 We next equip cell systems with three families of probability measures. They share the common feature that daughter cells evolve independently one of the other and according to the dynamics of the SSMP with characteristics $(\Psi, \alpha)$. We stress that the ancestor cell $\varnothing$, often referred to as Eve, may follow different dynamics, which then fully determine the law of the system.
 In other words, these probability measures are distributions of branching processes  on a space of trajectories  which have the same branching mechanisms, but for different random initial 
states, where the initial state refers here to the trajectory of Eve. 
 
We will be primarily interested in the case when the trajectory ${\mathcal X}_{\varnothing}$ of Eve  is also given by a SSMP with characteristics $(\Psi,\alpha)$. This 
 yields a first family of probability measures $({\mathcal P}_x)_{x\geq 0}$  on the space of cell systems.  More precisely, the latter is 
defined recursively as follows. We first let the Eve cell ${\mathcal X}_{\varnothing}$ have the law $P_x$ of the SSMP with characteristics $(\Psi,\alpha)$. Next, given ${\mathcal X}_{\varnothing}$, the processes of the sizes  of cells at the first generation, ${\mathcal X}_i=({\mathcal X}_i(s), s\geq 0)$ for $ i\geq 1$, have the distribution of a sequence of independent processes with respective laws $P_{x_i}$, where $x_1, x_2, \ldots$ denotes the sequence of the positive jump  sizes of $-{\mathcal X}_{\varnothing}$, ranked according to the deterministic procedure. We continue in an obvious way for the second generation, and so on for the next generations. 

The second and the third families,  $({\mathcal Q}^-_x)_{x\geq 0}$ and $({\mathcal Q}^+_x)_{x\geq 0}$, will play a sporadic role in this work  that will be explained later on. They correspond to
 the cases when the evolution of Eve is given by SSMP now with characteristics $(\Phi^-,\alpha)$, respectively $(\Phi^+,\alpha)$. 
 We shall denote the mathematical expectation under ${\mathcal P}_x$ (respectively, under ${\mathcal Q}^{\pm}_x$)
 by ${\mathcal E}_x$ (respectively, by ${\mathcal E}^{\pm}_x$).

 It will be convenient in the sequel to introduce the scaling transformation for cells. Specifically, we write for any $b>0$
\begin{equation}\label{E:scalecell}
{\mathcal X}^{(b)}_u=\left(b{\mathcal X}_u(b^{\alpha}s)\right)_{s\geq 0} \quad \text{and} \quad {\mathcal X}^{(b)}=({\mathcal X}^{(b)}_u, u\in\U),
\end{equation}
where $\alpha\in\R$ is the same parameter that we used for rescaling trajectories in \eqref{E:scalenot}.
 Note from the scaling property \eqref{scale} that
\begin{equation}\label{E:scalecellloi}
\hbox{ the law of ${\mathcal X}^{(b)}$ under ${\mathcal P}_x $ (respectively, ${\mathcal Q}^{\pm}_x $) is ${\mathcal P}_{bx}$  (respectively, ${\mathcal Q}^{\pm}_{bx} $) .
}
\end{equation}

\paragraph{Growth-fragmentations.} The growth-fragmentation $\X=(\X(t), t\geq 0)$ associated to a cell system ${\mathcal X}$ is the process describing the sequence of the masses of cells (repeated according to their multiplicities and ranked in the non-increasing order)  that are alive as a function of the absolute  time. That is, we write
 $$\X(t)=(X_1(t), X_2(t), \ldots)= \{\!\! \{{\mathcal X}_u(t-b_u): t\in [b_u, d_u)\ \&\ u\in\U \}\!\! \}^{\downarrow},$$ 
where the notation $\{\!\! \{ \cdots \}\!\! \}^{\downarrow}$ indicates that the elements of a multiset in $(0,\infty)$ are enumerated in the non-increasing order, and completed by infinitely $0$'s in the case where this multiset is finite.

Endowing cell systems with different distributions yield different laws of growth-fragmentations. Specifically, 
we write $\P_x$  (respectively, $\Q_x^{\pm}$) for the distribution of $\X$ induced by ${\mathcal P}_x$ 
(respectively, by ${\mathcal Q}_x^{\pm}$). 
The scaling property is immediately shifted to growth-fragmentations, namely for every $b>0$
$$\hbox{ the law of $\X^{(b)}\coloneqq \left(b\X(b^{\alpha}t)\right)_{t\geq 0}$ under $\P_x $ (respectively, $\Q^{\pm}_x$) is $\P_{bx}$ (respectively, $\Q^{\pm}_{bx}$).
}$$

Information about the genealogy of cells is lost when considering only $\X$ rather than ${\mathcal X}$, 
and even though the distributions of cell systems ${\mathcal P}_{x}$ and ${\mathcal Q}^{-}_{x} $ are generally mutually singular, the law of the growth-fragmentations $\P_x$  and $\Q_x^{-}$ are actually equivalent. More precisely, $\d \Q_x^{-}= x^{-\omega_-}\A \d \P_x$, where $\A$ is the so-called intrinsic area; see the next section here and also Section 4.3 in \cite{BBCK}. 
In turn, $\Q_x^{+}$ can be thought of as a version of the growth-fragmentation conditioned on having indefinite growth.

\section{The intrinsic area of a cell system and its density} 
\label{sec:area}

 The intrinsic area $\A$ of a cell system arises as the terminal value of a remarkable martingale for an underlying branching random walk. The purpose of this section is to establish several properties of its distribution, namely Theorem  \ref{thm:intro1} and \ref{thm:intro2}. Let us briefly sketch the strategy and the tools employed. First, we recall in Section \ref{ssec:smoothing} the classical \emph{smoothing transform} satisfied by $ \A$. This is a recursive distributional equation of the form
 $$ \A \quad \mathop{=}^{(\text{in law})} \quad  \sum_{i\geq 1} \gamma_i \A_i, $$
 where $ \A_i$ are i.i.d.~copies of $\A$ also independent of the positive vector $(\gamma_i)_{i\geq 1}$ whose law will be specified in \eqref{E:lawgamma} below.  In Section \ref{ssec:regularity} we rely on the work of Liu \cite{Liu2} and check the sufficient conditions on $(\gamma_i)_{i \geq 1}$ to get existence and smoothness of the density of $\A$ (Theorem \ref{thm:intro1}). 
 
 To prove Theorem \ref{thm:intro2} in Section \ref{ssec:asymp}, we shall rely on another fixed point equation. Specifically, using a spinal decomposition, we turn the smoothing transform into a recursive distributional equation of the type 
 $$ \A^- \quad  \mathop{=}^{(\text{in law})}  \quad  \A^- U + V,$$
 where $\A^-$ is the size-biaised version of $\A$ which is independent of the pair $(U,V)$, see Lemma \ref{LB}. Such equations are known under the name of \emph{random affine equations} or \emph{perpetuity equations}, see \cite{BDM}.  Actually, the law of $\A^-$ satisfies many such equations (roughly speaking, one for each Markovian path decomposition of the Eve cell).  In the framework where there are no positive jumps for the driving self-similar Markov process, we shall stop the Eve cell at   a first passage time. This allows us to obtain a specific random affine equation as above where $U$ only takes the values $0$ or $1+ \varepsilon$ (see Lemma \ref{LB}). By letting $ \varepsilon \to 0$ this enables us to study the density of $ \A^-$. See Section \ref{sec:discussion} for a discussion concerning the potential use of other random affine equations.

\subsection{Intrinsic area and smoothing transform}
\label{ssec:smoothing}

We introduce some further notation for cell systems. For every $n\geq 0$ the point process
$${\mathcal B}(n)\coloneqq \sum_{|u|=n+1} \delta_{{\mathcal X}_u(0)}$$
records the masses at birth of cells at the $(n+1)$-th generation.
We write ${\mathcal G}(n)=\sigma \left({\mathcal X}_u: |u|\leq n\right)$ for the $\sigma$-field
generated by the  trajectories of cells
at generation at most $n$, and underline that, since the masses at birth of cells at the $(n+1)$-th generation
are given by the sizes of the jumps of trajectories of cells at the $n$-th generation, ${\mathcal B}(n)$ is ${\mathcal G}(n)$-measurable.
It will be convenient in the sequel to identify implicitly ${\mathcal B}(n)$ with the sequence of its atoms, of course repeated according to their multiplicities. Thanks to Lemma 3 in \cite{BeMGFP}, we can view ${\mathcal B}(n)$  
under ${\mathcal P}_x$ 
 as a random variable with values in the space 
$\ell^{\omega_-}_+$ of nonnegative sequences $\x=(x_1, \ldots)$ with  $\sum_{1}^{\infty} x_j^{\omega_-}<\infty$.

We recall  that under the family of laws $({\mathcal P}_x)_{x\geq 0}$,  $({\mathcal B}(n))_{ n\geq 0}$ is a  branching random walk on $(0,\infty)$ equipped with the multiplication, meaning that the image of ${\mathcal B}(n)$ by the logarithmic function forms a branching random walk on $\R$  in the usual sense. This is readily seen from the branching and self-similarity properties; see Section 3.4 in \cite{BeMGFP} for details. 

In this setting, we recall from Section 2.3 in \cite{BBCK} that Cram\'er's condition \eqref{E:Cramer}
yields two important ${\mathcal P}_x$-martingales 
$${\mathcal M}^{\pm}(n) 
\coloneqq  \sum_{|u|=n+1 }{\mathcal X}_{u}^{\omega_{\pm}}(0), \qquad n\geq 0.$$
More precisely, ${\mathcal M}^+$ has terminal value 
$$\lim_{n\to \infty} {\mathcal M}^+(n)=0\qquad {\mathcal P}_x\text{-a.s.,}$$
whereas ${\mathcal M}^-$
 is uniformly integrable under  ${\mathcal P}_x$. The latter is often referred to as the intrinsic martingale; its terminal value 
\begin{equation}\label{E:defA}
\A\coloneqq  \lim_{n\to \infty} {\mathcal M}^-(n)>0
, \qquad \text{${\mathcal P}_x$-a.s.  and in $L^1({\mathcal P}_x)$,}
\end{equation}
is called the intrinsic area. The terminology comes from the connection with certain random surfaces, see \cite{BCK, BBCK,LGR}; we also refer to \cite{GED2,GED} for  fine studies of this notion. 
We further recall from Proposition 2.2 in \cite{BBCK} that  
$${\mathcal E}_x(\A)={\mathcal E}_x({\mathcal M}^-(0))=x^{\omega_-}.$$

The distribution of the intrinsic area will play a key role in this study.
It is plain by scaling \eqref{E:scalecellloi} 
that for every $x>0$,  the law of $x^{\omega_-} \A$ under ${\mathcal P}_1$ is the same as that of  $\A$ under ${\mathcal P}_{x}$, 
and henceforth we focus on the case $x=1$ without loss of generality.   In the present setting, the smoothing transform related to $\A$  has been described by Equation (15) in \cite{BBCK} as follows.  If we 
 write $(\A_i)_{i\geq 1}$ for a sequence of i.i.d.~copies of $\A$ under ${\mathcal P}_1$, then  there is the identity in distribution
\begin{equation}\label{E:smoothA}
\text{ the law of $\A$ under ${\mathcal P}_1$ is the same as that of }  \sum_{i=1}^{\infty} \gamma_i\A_i, 
\end{equation}
where 
 \begin{equation}\label{E:lawgamma}
\text{ $(\gamma_i)_{i\geq 1}$  has the law of some enumeration of
$\{\!\! \{|\Delta{X}(t)|^{\omega_-}: 0<t<\zeta\}\!\!\}$ under $P_1$,}
\end{equation}
 and is further independent of $(\A_i)_{i\geq 1}$.
 In the rest of this section, \eqref{E:smoothA} and some related expressions will play a key role for investigating properties of the law of the intrinsic area.

   \subsection{Regularity of the law of the intrinsic area}
   \label{ssec:regularity}
   
Here we prove Theorem \ref{thm:intro1}, which establishes in particular the existence of a smooth density for $\A$.

\begin{proof}[Proof of Theorem \ref{thm:intro1}] The proof relies  on Liu \cite{Liu2}. We note that Liu considers smoothing transforms in which the series has only finitely many terms a.s.; however, as far as the results that are needed here are concerned,  his arguments work just as well for infinite series. 
Using Theorem 2.1 in  \cite{Liu2}, 
we shall prove that the characteristic function $\phi(\theta)={\mathcal E}_1(\exp({\mathrm i}\theta \A))$, $\theta\in\R$,  fulfills 
$\phi(\theta)=O(|\theta|^{-b})$ as $|\theta| \to \infty$ for every $b>0$. This ensures the existence of a density in  ${\mathcal C}^{\infty}_0(\R)$
by standard Fourier analysis; since the area is a nonnegative random variable, this density can be viewed as a function in ${\mathcal C}^{\infty}_0(\R^*_+)$.

Our standing assumptions guarantee
that the SSMP $X$ makes infinitely many jumps $P_1$-a.s., so $\gamma_j>0$ a.s. for all $j\geq 1$ and 
 {\em a fortiori}  extinction never occurs for the branching random walk $({\mathcal B}(n))_{n\geq 0}$. Condition (2.1) in \cite{Liu2} is thus fulfilled.
Recall also that ${\mathcal E}_1(\A)={\mathcal E}_1(\sum_{j\geq 1} \gamma_j)=1$, which is Condition (2.2) in \cite{Liu2}. 
The core of the proof now amounts to checking the first part of Condition (2.3) in \cite{Liu2}, as the second part is trivially fulfilled in our setting. 

Specifically, we have to prove that 
\begin{equation} \label{E:mom-b}
{\mathcal E}_1(\gamma_1^{-b})<\infty \qquad \text{for any }b>0.
\end{equation}
In this direction, we first note that, since time-substitution does not alter the sizes of jumps,  the Lamperti transformation for SSMP implies that we may choose 
$$\gamma_1=\sup\{|\Delta \e^{\xi}(t)|^{\omega_-}: t\geq 0\}.$$
Denote the (left) tail of the L\'evy measure by $\bar \Lambda(y)=\Lambda((-\infty,y))$ for $y<0$.
The first instant $T(y)=\inf\{t\geq 0: \Delta \xi(t) <y\}$ when $\xi$ makes a jump with (relative) size less than $y$, has an exponential distribution with parameter $\bar \Lambda(y)$, and is further independent of the process $\xi_y$ that obtained from $\xi$ by suppressing all its jumps $\Delta \xi(t)$ with $\Delta \xi(t)<y$. 

On the one hand, since $\xi(T(y)-)= \xi_y(T(y))$, 
there is the lower bound
$$\gamma_1 \geq (1-\e^y)^{\omega_-}\exp(\omega_- \xi_y(T(y))).$$
On the other hand, $\xi_y$ is a spectrally negative  L\'evy process with Laplace exponent
$$\Psi_y(q) \coloneqq  \frac{1}{2}\sigma^2q^2 + dq +\int_{[y,0)}\left( \e^{qx}-1+q(1-\e^x)\right) \Lambda(\d x)
+q \int_{(-\infty, y)}(1-\e^x) \Lambda(\d x)\,,\qquad q\in \R.$$
We stress that this quantity is finite for all $q\in\R$, and we have  $E(\exp(q\xi_y(t)))=\exp(t\Psi_y(q))$ for all $t\geq 0$ and $q\in\R$.
We take $q=-b\omega_-<0$ and observe that the map $y\mapsto \Psi_y(q)$ is non-increasing on $(-\infty,0)$. 
Since $\lim_{y\to 0+} \bar \Lambda(y)=\infty$, we may choose $y<0$ close enough to $0$ so that 
$\bar \Lambda(y)>\Psi_y(q)$. This yields 
$$E\left(\exp(-b\omega_- \xi_y(T(y)))\right)=1/(\bar \Lambda(y)-\Psi_y(q))<\infty,$$
and  we conclude that  ${\mathcal E}_1(\gamma_1^{-b})<\infty$.
\end{proof}

\subsection{Asymptotic behavior of the density}
\label{ssec:asymp}

The goal of this section is to establish Theorem \ref{thm:intro2}. As explained in the beginning of this section, we shall work under ${\mathcal Q}^-_1$, the distribution of the cell system when the trajectory of the Eve cell has the law 
$Q^-_1$ (recall that $Q^-_x$ denotes the distribution of the SSMP with characteristics $(\Phi^-,\alpha)$ which is associated to  the spectrally negative L\'evy process $\eta^-$ by the Lamperti transformation) whereas any cell at generation at least $1$ and started with mass $x>0$  follows the law $P_x$. The main advantage of working under ${\mathcal Q}^-_1$ is that  when one splits the contribution to the intrinsic area of the Eve cell before and after  a first passage time, one gets a tractable random affine equation thanks to path decompositions for L\'evy processes with no positive jumps (see Lemma \ref{LB}). The proof  of Theorem \ref{thm:intro2} then consists in analyzing  infinitesimal first passage times.

\bigskip

More precisely, note that the almost sure convergence in \eqref{E:defA} also holds under  ${\mathcal Q}^-_1$, because the (branching) transitions probabilities of $({\mathcal B}(n))_{n\geq 0}$ are the same under  $({\mathcal P}_x)_{x> 0}$ as under $({\mathcal Q}^-_x)_{x> 0}$; only the initial distribution of ${\mathcal B}(0)$ is different.
Beware however that uniformly integrability may fail under  ${\mathcal Q}^-_1$, simply because 
the initial variable ${\mathcal M}^-(0)$ may have an infinite expectation and then also ${\mathcal E}^-_1(\A)=\infty$ (we see from  the forthcoming Lemma \ref{LA} that ${\mathcal E}^-_1(\A)={\mathcal E}_1(\A^2)$, 
and thus from \eqref{E:tailA} that 
${\mathcal E}^-_1(\A)<\infty$ if and only if $\omega_+> 2\omega_-$).  In this setting, the counterpart of \eqref{E:smoothA} reads
\begin{equation}\label{E:smoothA-}
\text{ the law of $\A$ under ${\mathcal Q}^-_1$ is the same as that of } \A^-\coloneqq \sum_{i=1}^{\infty} \gamma^-_i\A_i, 
\end{equation}
where $(\gamma^-_i)_{i\geq 1}$  is  independent of $(\A_i)_{i\geq 1}$ and  has the law of some enumeration of
$\{\!\! \{|\Delta{X}(t)|^{\omega_-}: 0<t<\zeta\}\!\!\}$ under $Q^-_1$.  We stress that in \eqref{E:smoothA-}, the 
$\A_i$ are i.i.d.~versions of the intrinsic area under ${\mathcal P}_1$ (not under ${\mathcal Q}^-_1$ !) and further note from Lamperti's transformation that
\begin{equation}\label{E:lawgamma-}
\text{ $(\gamma^-_i)_{i\geq 1}$  has the law of some enumeration of
$\{\!\! \{|\Delta \e^{\eta^{-}}(t)|^{\omega_-}: t > 0\}\!\!\}$.}
\end{equation}

We first point at a simple relation between the distribution of the intrinsic area under ${\mathcal P}_1$ and under ${\mathcal Q}^-_1$.
Recall the notation $\omega_{\Delta}\coloneqq  \omega_+- \omega_-$ and that $c$ is the constant appearing in \eqref{E:tailA}.

\begin{lemma} \label{LA} The distribution of $\A^-$, that is that of $\A$  under ${\mathcal Q}^-_1$
is the size-biased of that under ${\mathcal P}_1$. Specifically, one has
$${\mathcal Q}^-_1(\A\in \d r) =a^-(r) \d r  \quad \text{ with }a^-(r)\coloneqq r a(r), r\in\R,$$
and as a consequence, 
$${\mathcal Q}^-_1(\A> r) \sim c \frac{\omega_+}{\omega_{\Delta}}r^{-\omega_{\Delta}/\omega_-}\qquad \text{as }r\to \infty.$$ 
\end{lemma}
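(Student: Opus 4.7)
The plan is to read off both statements essentially for free from the change-of-measure relation $\mathrm{d}\Q^-_1 = \A \, \mathrm{d}\P_1$ between the growth-fragmentation laws, recalled at the end of Section \ref{sec:notation} (and established in \cite[Section 4.3]{BBCK}), together with the regularity and asymptotics of $a$ furnished by Theorems \ref{thm:intro1} and \ref{thm:intro2}(i). Since the intrinsic area $\A$ is a measurable functional of the mass process $\X$, the distribution of $\A$ under ${\mathcal Q}^-_1$ coincides with its distribution under $\Q^-_1$, and the tilt is applicable.

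Concretely, for any bounded Borel $f:\R_+\to\R$, I would write
$$
{\mathcal E}^-_1\bigl[f(\A)\bigr] \;=\; \Q^-_1\bigl[f(\A)\bigr] \;=\; \P_1\bigl[\A \cdot f(\A)\bigr] \;=\; \int_0^\infty r\, f(r)\, a(r)\, \mathrm{d} r,
$$
using Theorem \ref{thm:intro1} for the last equality. Identifying this as the integral against the image measure shows that $\A$ has density $a^-(r)=r\, a(r)$ under ${\mathcal Q}^-_1$, proving the first assertion.

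For the tail asymptotic, I would plug Theorem \ref{thm:intro2}(i), namely $a(r)\sim c(\omega_+/\omega_-) r^{-1-\omega_+/\omega_-}$, into the formula $a^-(r)=r a(r)$ to obtain
$$
a^-(r) \;\sim\; c\,\frac{\omega_+}{\omega_-}\, r^{-\omega_+/\omega_-},\qquad r\to\infty.
$$
Because $\omega_+>\omega_-$, the exponent is strictly less than $-1$ so $a^-$ is integrable at infinity; Karamata's theorem for regularly varying functions (or a direct change of variables $s=rt$ and dominated convergence) then gives
$$
{\mathcal Q}^-_1(\A>r) \;=\; \int_r^{\infty} a^-(s)\,\mathrm{d} s \;\sim\; c\,\frac{\omega_+}{\omega_-}\cdot\frac{\omega_-}{\omega_\Delta}\, r^{-\omega_\Delta/\omega_-} \;=\; c\,\frac{\omega_+}{\omega_\Delta}\, r^{-\omega_\Delta/\omega_-}.
$$

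The argument is essentially immediate once the two earlier theorems are invoked as black boxes; no real obstacle arises. The only point deserving a remark is that the tilting formula involves the growth-fragmentation measures $\P_1$ and $\Q^-_1$ (not the cell-system measures ${\mathcal P}_1$ and ${\mathcal Q}^-_1$, which are mutually singular in general), but since $\A$ depends on the cell system only through the mass process $\X$, this discrepancy is harmless for the computation above.
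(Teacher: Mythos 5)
The first two assertions are handled correctly and by essentially the paper's route: the change-of-measure $\d\Q^-_1 = \A\, \d\P_1$ you invoke is exactly the content of the spinal decomposition (\cite[Theorem~4.7]{BBCK}) that the paper cites, and your observation that $\A$ is a functional of $\X$ alone (so the tilt can be applied at the level of growth-fragmentation laws even though ${\mathcal P}_1$ and ${\mathcal Q}^-_1$ are mutually singular as cell-system laws) is the right justification. Combined with Theorem~\ref{thm:intro1}, this gives the density $a^-(r)=r\,a(r)$ as you say.

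However, your derivation of the tail asymptotic is circular. You plug in Theorem~\ref{thm:intro2}(i), but the proof of Theorem~\ref{thm:intro2}(i) appears \emph{after} Lemma~\ref{LA} in the paper and uses it repeatedly: the proof of Lemma~\ref{LD}(iii), which is one of its inputs, explicitly invokes the tail estimate ${\mathcal Q}^-_1(\A>r)\sim c\,\omega_+\,\omega_\Delta^{-1}\, r^{-\omega_\Delta/\omega_-}$ of Lemma~\ref{LA}, and Lemma~\ref{LC} also rests on Lemma~\ref{LA}. So you cannot use the pointwise asymptotic of $a$ here; at this stage only the tail asymptotic \eqref{E:tailA} for ${\mathcal P}_1(\A>r)$ is available. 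The intended (and non-circular) argument integrates $a^-(s)=s\,a(s)$ by parts against $F(s)\coloneqq{\mathcal P}_1(\A>s)=\int_s^\infty a$: writing ${\mathcal Q}^-_1(\A>r)=\int_r^\infty s\,a(s)\,\d s = rF(r)+\int_r^\infty F(s)\,\d s$ (the boundary term at infinity vanishes since $sF(s)\sim c\,s^{-\omega_\Delta/\omega_-}\to 0$), and then using \eqref{E:tailA} together with Karamata's direct theorem gives $rF(r)\sim c\,r^{-\omega_\Delta/\omega_-}$ and $\int_r^\infty F(s)\,\d s\sim c\,\omega_-\omega_\Delta^{-1}\,r^{-\omega_\Delta/\omega_-}$, whose sum is $c\,\omega_+\omega_\Delta^{-1}\,r^{-\omega_\Delta/\omega_-}$ as claimed.
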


\begin{proof} The first claim  is an immediate consequence of the  so-called 
spinal decomposition for cell systems 
under the tilted probability measure $ \A{\mathcal P}_1$;  see
Section 4.3 in \cite{BBCK} and more precisely Theorem 4.7 there.
The second assertion then follows from Theorem \ref{thm:intro1}, and the third 
 by combination  with \eqref{E:tailA}. 
\end{proof}

In short, Lemma \ref{LA} enables
us to rephrase Theorem \ref{thm:intro2} in terms of the variable
$\A^-$, and we shall analyze the distribution of the latter by 
combining   \eqref{E:smoothA-}  and \eqref{E:lawgamma-} with some well-known properties of  the spectrally negative L\'evy processes $\eta^{\pm}$.  We refer to Section 8.1 in \cite{Kyp} for background from which the   assertions below can be inferred.

We first point from \eqref{E:Cramer} and \eqref{E:Phipm}  at the identities 
\begin{equation} \label{E:relphi}
\Phi^-(\omega_{\Delta})=0\quad\text{and} \quad \Phi^+(q)=\Phi^-(\omega_{\Delta} +q) \quad\text{for all }q\geq 0.
\end{equation}
Clearly,   $\eta^{\pm}$ drifts to $\pm\infty$  in the sense that $\lim_{t\to \infty} \eta^{\pm}(t)={\pm}\infty$  a.s., and roughly speaking, $\eta^+$ should be thought of as a version of $\eta^-$
conditioned to drift to $+\infty$. Rigorously, introduce for $x>0$ the first passage time above the level $x$,
$$t^{\pm}(x)\coloneqq \inf\{t>0: \eta^{\pm}(t)>x\}.$$
Then the process $(t^-(x))_{x\geq 0}$ is a subordinator killed at rate $\omega_{\Delta}$, whereas  $(t^+(x))_{x\geq 0}$
is a subordinator with finite exponential moment of some positive order. In particular, we have for every $x>0$
\begin{equation}\label{E:tempspass}
P(t^-(x)<\infty) = \e^{-x \omega_{\Delta}} \quad\text{and} \quad E(\exp(\theta t^+(x))) =\exp(x \varrho(\theta))<\infty \quad\text{for some } \theta>0,
\end{equation}
where $\varrho(\theta)>0$ solves  $\Phi^+(-\varrho(\theta))=\Phi^-(\omega_{\Delta}-\varrho(\theta))= -\theta$.
We also stress that, due to the absence of positive jumps,  whenever the first passage above $x$ takes place, it must occur continuously. Finally, $(\eta^+(t))_{0\leq t < t^+(x)}$ has the same distribution as 
$(\eta^-(t))_{0\leq t < t^-(x)}$ conditioned on $t^-(x)<\infty$. 

 Applying the strong Markov property for $\eta^-$ at time $t^-(x)$ conditionally on the event $t^-(x)<\infty$  to the decomposition  $$\{\!\! \{|\Delta \e^{\eta^{-}}(t)|^{\omega_-}: t > 0\}\!\!\} = \{\!\! \{|\Delta \e^{\eta^{-}}(t)|^{\omega_-}: 0<t< t^-(x)\}\!\!\} \sqcup \{\!\! \{|\Delta \e^{\eta^{-}}(t)|^{\omega_-}: t > t^-(x)\}\!\!\},$$
we immediately deduce from the facts recalled above the following  random  affine equation, (which we write in a conditional form for simplicity).

\begin{lemma}\label{LB} Fix $x>0$. Keeping the same notation as in \eqref{E:smoothA-} and \eqref{E:lawgamma-}, we have
$$ \text{ the conditional distribution of $\A^-$ given $t^-(x)<\infty$
is the same as that of $\A^{+}(x)+ \e^{x\omega_-} \A^-$, }$$
where  $\A^{+}(x)$ and $\A^-$ are independent variables,  
$$\A^{+}(x) \coloneqq \sum_{i=1}^{\infty} \gamma^{+}_i(x)\A_i,$$
with $(\A_j)_{j\geq 1}$ a sequence of i.i.d.~copies of $\A$ under ${\mathcal P}_1$ and $(\gamma^+_i(x))_{i\geq 1}$  an independent sequence which has the law of some enumeration of
$\{\!\! \{|\Delta \e^{\eta^{+}}(t)|^{\omega_-}: 0< t < t^+(x)\}\!\!\}$.  

\end{lemma}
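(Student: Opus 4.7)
The plan is to decompose the expression \eqref{E:smoothA-} for $\A^-$ according to whether a jump of $\eta^-$ occurs before or after the first passage time $t^-(x)$, and then appeal to the strong Markov property together with the identification of the pre-passage path of $\eta^-$ (conditioned on $\{t^-(x)<\infty\}$) with that of $\eta^+$ which was just recalled. On $\{t^-(x)<\infty\}$, write
\[
\A^- \;=\; S_1 + S_2, \qquad S_1\coloneqq\!\!\sum_{0<t<t^-(x)}\!\! |\Delta \e^{\eta^-(t)}|^{\omega_-}\A_{i(t)}, \qquad S_2\coloneqq\sum_{t>t^-(x)}|\Delta \e^{\eta^-(t)}|^{\omega_-}\A_{i(t)},
\]
where $i(t)$ is any measurable injective labelling of the jump times by distinct indices (the resulting distribution is insensitive to the choice of labelling since the $(\A_j)_{j\geq 1}$ are i.i.d.). Note that $t^-(x)$ is itself almost surely not a jump time since, by absence of positive jumps, $\eta^-$ creeps upward across the level $x$.

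For $S_2$, apply the strong Markov property of $\eta^-$ at $t^-(x)$. Absence of positive jumps gives $\eta^-(t^-(x))=x$ on $\{t^-(x)<\infty\}$, and the shifted process $\tilde\eta(s)\coloneqq\eta^-(t^-(x)+s)-x$ is independent of $\f_{t^-(x)}$ with the same law as $\eta^-$. Using $\Delta \e^{\eta^-(t^-(x)+s)} = \e^{x}\Delta \e^{\tilde\eta(s)}$, one gets $|\Delta \e^{\eta^-(t^-(x)+s)}|^{\omega_-}=\e^{x\omega_-}|\Delta \e^{\tilde\eta(s)}|^{\omega_-}$, so that $S_2=\e^{x\omega_-}\widetilde{\A}^-$, where $\widetilde{\A}^-$ is a copy of $\A^-$ built from the jumps of $\tilde\eta$ and a fresh independent family of i.i.d. copies of $\A$ under ${\mathcal P}_1$; this is independent of $\f_{t^-(x)}$ and in particular of $S_1$.

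For $S_1$, invoke the fact recalled just before the lemma: conditionally on $\{t^-(x)<\infty\}$, the process $(\eta^-(t))_{0\leq t<t^-(x)}$ has the same law as $(\eta^+(t))_{0\leq t<t^+(x)}$, which is the Cramér-type tilting encoded in the relation $\Phi^+(q)=\Phi^-(\omega_{\Delta}+q)$ from \eqref{E:relphi}. Replacing the pre-passage trajectory accordingly identifies the conditional law of $S_1$ with that of $\sum_{0<t<t^+(x)}|\Delta \e^{\eta^+(t)}|^{\omega_-}\A_{i(t)}=\A^+(x)$. Combined with the contribution from $S_2$, this yields the claimed identity in distribution.

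The substantive content is concentrated in two points. The first is the absence-of-positive-jumps hypothesis, which is what ensures $\eta^-(t^-(x))=x$ exactly (no overshoot) and thereby produces the clean multiplicative factor $\e^{x\omega_-}$ in $S_2$; this is the main reason Theorem \ref{thm:intro2} is stated under this assumption. The second, and the more delicate ingredient, is the identification in law of the pre-passage trajectories of $\eta^-$ and $\eta^+$: this is a classical consequence of the Cramér exponential change of measure on the space of paths stopped at $t^-(x)$, of Radon--Nikodym density $\e^{\omega_\Delta \eta^-(t)}$, and is where the relation \eqref{E:relphi} between $\Phi^-$ and $\Phi^+$ is used.
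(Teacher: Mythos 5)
Your proof is correct and follows essentially the same route as the paper: decompose the multiset of jumps at $t^-(x)$, use the strong Markov property together with the creeping property (no overshoot) to obtain the factor $\e^{x\omega_-}$ and the independent copy of $\A^-$, and identify the pre-passage piece conditioned on $\{t^-(x)<\infty\}$ with $\eta^+$ killed at $t^+(x)$. The paper states the argument in one line by referring to the facts recalled immediately before the lemma; your write-up simply spells out those same ingredients (including the correct remark that the labelling of jump times is immaterial because the $(\A_j)$ are i.i.d. and independent of the driving process).
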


Taking into account \eqref{E:tempspass}, if follows that, for every $x,r>0$:
\begin{equation}
\label{eq:tailA}
 {\mathcal P}(\A^->r) = \e^{-x \omega_{\Delta}}  {\mathcal P}(\A^+(x)+\e^{x\omega_-}\A^->r) + \left(1-\e^{-x \omega_{\Delta}} \right) {\mathcal P}(\A^->r\mid t^-(x)=\infty).
\end{equation}
This identity will be at the heart of the proof of Theorem \ref{thm:intro2}, which will consist in first taking $x \to 0+$ and then $r \to + \infty$.

We will need the following technical lemma, whose proof is postponed to the end of this section.

\begin{lemma} \label{LC} For every $1\leq p < \omega_+/\omega_-$, we have:
\begin{enumerate}
\item[(i)]  $\displaystyle{\mathcal E}\left( (\A^+(x))^p\right) = O(x)$ as $x\to 0+$;
\item[(ii)]
$\displaystyle \limsup_{x\to 0+} \frac{1}{x} {\mathcal P}(\A^+(x)>R- \e^{x \omega_-}\A^-\geq 0)
 = o(R^{-p}) \qquad \text{as }R\to \infty.$
\end{enumerate}
\end{lemma}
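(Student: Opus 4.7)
The plan is to handle the two parts sequentially: (i) reduces via Minkowski's inequality to a moment bound on an auxiliary subordinator, and (ii) follows from a two-region decomposition combined with (i) and the tail estimate of Lemma \ref{LA}.

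For part (i), since $\A^+(x) = \sum_i \gamma^+_i(x)\A_i$ with the $\A_i$ i.i.d.\ copies of $\A$ under $\mathcal{P}_1$ independent of $(\gamma^+_i(x))_i$, and since $\mathcal{E}(\A^p) < \infty$ for $p \in [1, \omega_+/\omega_-)$ by \eqref{E:tailA}, I would condition on the weights and apply Minkowski's inequality in $L^p$ to obtain
\[
\mathcal{E}((\A^+(x))^p) \leq \mathcal{E}(\A^p)\, E(S_x^p), \qquad S_x := \sum_{i \geq 1}\gamma^+_i(x).
\]
The absence of positive jumps forces $\eta^+(t-)\leq x$ for every $0 < t < t^+(x)$, hence $S_x \leq e^{\omega_- x}T_x$ with $T_x := \sum_{0 < t < t^+(x)}(1-e^{\Delta \eta^+(t)})^{\omega_-}$. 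The key observation is that $(T_x)_{x \geq 0}$ is itself a subordinator: the strong Markov property of $\eta^+$ at the stopping time $t^+(x)$, combined with the spatial homogeneity of $\eta^+$, shows that $T_{x+y}-T_x$ is independent of $\mathcal{F}_{t^+(x)}$ and distributed as $T_y$. Its jumps are bounded by $1$ and its mean is linear in $x$ via the compensation formula, so all cumulants $\kappa_n(T_x)$ are linear in $x$, whence $E(T_x^n)$ is a polynomial in $x$ with vanishing constant term. Thus $E(T_x^n) = O(x)$ for every integer $n \geq 1$, and the elementary bound $T_x^p \leq T_x + T_x^{\lceil p \rceil}$ extends this to real $p \geq 1$.

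For part (ii), fix $p' \in (p, \omega_+/\omega_-)$ and $\epsilon \in (0, 1)$, and split the event $\{\A^+(x) > R - e^{x\omega_-}\A^-\geq 0\}$ according to whether $e^{x\omega_-}\A^- \leq (1-\epsilon)R$ or $e^{x\omega_-}\A^- \in ((1-\epsilon)R, R]$. On the first sub-event, $\A^+(x) > \epsilon R$, so Markov's inequality with exponent $p'$ combined with (i) yields $(\epsilon R)^{-p'}\mathcal{E}((\A^+(x))^{p'}) = O(xR^{-p'})$, whose ratio to $x$ is $O(R^{-p'}) = o(R^{-p})$ since $p' > p$. On the second sub-event, by independence of $\A^+(x)$ and $\A^-$ the contribution equals $\int_{(1-\epsilon)Re^{-x\omega_-}}^{Re^{-x\omega_-}}\mathcal{P}(\A^+(x) > R - e^{x\omega_-}y)\,a^-(y)\,dy$; after the change of variable $z = R - e^{x\omega_-}y$, a Stieltjes integration by parts against the tail function of $\A^-$ is applied, split at $z = (Cx)^{1/p'}$: on $\{z > (Cx)^{1/p'}\}$ Markov with exponent $p'$ and part (i) provide the needed $x$-factor, while on $\{z \leq (Cx)^{1/p'}\}$ the trivial bound $\mathcal{P}(\A^+(x) > z) \leq 1$ is combined with the tail estimate $\mathcal{P}(\A^- > r) = O(r^{-\omega_\Delta/\omega_-})$ of Lemma \ref{LA}.

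The main obstacle is precisely the boundary sub-event in (ii): the tail of $\A^-$ alone produces a bound of order $R^{-\omega_\Delta/\omega_-}$ with no $x$-decay, while a naive Markov bound on $\A^+(x)$ generates a non-integrable singularity as $z \to 0$. The resolution is to exploit the fact that the window of integration has length of order $x^{1/p'}$, over which the tail $\bar F_{\A^-}$ varies by a controlled amount, and to balance this against the first-moment bound $\mathcal{E}(\A^+(x)) = O(x)$ from (i); the two factors combine to yield a bound of order $O(xR^{-q})$ with $q\in(p,\omega_+/\omega_-)$, giving the required $o(R^{-p})$ rate after division by $x$.
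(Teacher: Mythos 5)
Your approaches to both parts differ from the paper's, but each contains a genuine gap.

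For part (i), your Minkowski reduction to $E[S_x^p]$ is fine, and the observation that $(T_x)_{x\geq 0}$ is a subordinator in $x$ (via the strong Markov property at $t^+(x)$ and the absence of positive jumps) is correct. However, the assertion that its jumps are bounded by $1$ is false, and this is what your finiteness-of-cumulants argument rests on. Each \emph{individual term} $(1-\e^{\Delta\eta^+(t)})^{\omega_-}$ is bounded by $1$, but a jump of the process $x\mapsto T_x$ corresponds to a whole excursion of $\eta^+$ below its running supremum (a flat stretch of $t^+$), and the jump size is the \emph{sum} of these terms over all jumps occurring during that excursion; there is no uniform bound. Without this, the claim that all cumulants of $T_x$ are finite needs a separate argument. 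The paper circumvents this by compensating the marked Poisson sum and applying Burkholder--Davis--Gundy iteratively, ultimately reducing to the moment bound $E[(t^+(x))^p]=O(x)$, which follows from the exponential moments of the first-passage subordinator.

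For part (ii), the split into the bulk region $\{\e^{x\omega_-}\A^-\leq(1-\varepsilon)R\}$ (handled by Markov at exponent $p'$) and the boundary region is a reasonable start. But the boundary region cannot be controlled by the tail estimate of Lemma~\ref{LA} alone. The quantity you need is essentially $\frac1x\,\bigl[\bar F_{\A^-}(R-L)-\bar F_{\A^-}(R)\bigr]$ for a window of length $L\asymp x^{1/p'}$, and since $x^{1/p'}\gg x$, dividing an increment over a window of that length by $x$ diverges unless you have a \emph{pointwise} bound on the density $a^-$ near $R$; the tail estimate $\bar F_{\A^-}(r)=O(r^{-\omega_\Delta/\omega_-})$ carries no local information and gives a bound with no $x$-decay, as you yourself note. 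Your phrase ``the tail varies by a controlled amount'' is exactly the missing pointwise density estimate, and it is not provable from what you have cited. This is the crux of the paper's proof of (ii): it first derives from the random affine equation \eqref{eq:tailA} the inequality $r\omega_-\,a^-(r)\leq \omega_\Delta\,{\mathcal P}(\A^->r)$ (by applying \eqref{eq:tailA} with $x=\varepsilon$ and sending $\varepsilon\to0+$), which together with Lemma~\ref{LA} gives $\sup_{r\geq R}a^-(r)=O(R^{-1-\omega_+/\omega_-})$; only then does the boundary contribution become $O(x)\cdot O(R^{-1-\omega_+/\omega_-})=o(xR^{-p})$. Without that step, your balancing argument does not close.
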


We continue our analysis by considering the asymptotic behavior of the conditional law of $\A^-$ given 
$t^-(x)=\infty$   as $x\to 0+$. This relies on some features on path decompositions of L\'evy processes without positive jumps at their overall supremum. We introduce 
$$\varsigma^-=\sup\{\eta^-(t): t\geq 0\} \quad \text{and}\quad  {v}=\inf\left\{t\geq 0: \sup_{0\leq s \leq t}\eta^-(s)=\varsigma^-\right\}$$ 
 for the overall supremum of $\eta^-$ and  the (first) instant when it is reached. 
 Writing 
 $$\vec{\eta}=\left(\vec{\eta}(s)\coloneqq \eta^{-}({v}+s)-\varsigma^-\right)_{ s\geq 0}$$
 for the post-supremum process. We re-express \eqref{E:lawgamma-} using the decomposition 
$$\{\!\! \{|\Delta \e^{\eta^{-}}(t)|^{\omega_-}: t > 0\}\!\!\} = \{\!\! \{|\Delta \e^{\eta^{-}}(t)|^{\omega_-}: 0<t< v\}\!\!\}
\sqcup \{\!\! \{\e^{\omega_-\varsigma^-}|\Delta \e^{\vec{\eta}}(t)|^{\omega_-}: t \geq  0\}\!\!\}$$
(we stress  that 
$\vec{\eta}(0)=\eta^-({v})-\varsigma^-=0$ a.s. if and only if the L\'evy process $\eta^-$ has unbounded variation; 
otherwise $\vec{\eta}(0)<0$ a.s. and we  view $t=0$ as a jump time of the trajectory by agreeing that 
$\vec{\eta}(0-)=0$). 
It is known from \cite{Be91} that
the pre-supremum process 
 $(\eta^-(t))_{ 0\leq t < {v}}$ and the post-supremum process $\vec{\eta}$ are  independent. More precisely, the pre-supremum process has the same law as $\eta^+$ killed when it reaches an independent exponential level with parameter $\omega_{\Delta}$.
In turn, the post-supremum process can be viewed as the version of $\eta^-$ conditioned to stay negative, 
i.e. the limit in distribution of $\eta^-$ conditioned on $\varsigma^-<\varepsilon$ as $\varepsilon \to 0+$.

\begin{lemma} \label{LD}  Introduce the variable 
$$\vec{\A} \coloneqq \sum_{i=1}^{\infty} \vec{\gamma}_i\A_i,$$
where $(\A_j)_{j\geq 1}$ is a sequence of i.i.d.~copies of $\A$ under ${\mathcal P}_1$, 
and $(\vec{\gamma}_i)_{i\geq 1}$ an independent sequence which has the law of some enumeration of
$\{\!\! \{|\Delta \e^{\vec{\eta}}(t)|^{\omega_-}: t> 0\}\!\!\}$. We then have:

\begin{enumerate}
\item[(i)] $\displaystyle \lim_{x\to  0+} {\mathcal P}(\A^->r\mid \varsigma^-\leq x) = {\mathcal P}(\vec{\A}>r)$ for every $r>0$,
\item[(ii)] $ \displaystyle r \omega_- a^-(r) =  \omega_{\Delta} \left( {\mathcal P}(\A^->r)-  {\mathcal P}(\vec{\A}>r)\right)
- \limsup_{x\to 0+} \frac{1}{x} {\mathcal P}(\A^+(x)>r- \e^{x \omega_-}\A^-\geq 0),$

\item[(iii)]
$ \displaystyle \lim_{r\to \infty} r^{\omega_{\Delta}/\omega_-} {\mathcal P}(\vec{\A}>r)=0$.
\end{enumerate} 
\end{lemma}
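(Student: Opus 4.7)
The proof of all three assertions rests on the classical path decomposition of $\eta^-$ at its overall supremum, recalled above: the pre-supremum piece $(\eta^-(t))_{0\le t<v}$ has the law of $\eta^+$ killed at an independent $\mathrm{Exp}(\omega_\Delta)$ level, and is independent of the post-supremum process $\vec{\eta}$. Splitting the multiset $\{\!\!\{|\Delta e^{\eta^-}(t)|^{\omega_-}:t>0\}\!\!\}$ into jumps before $v$ and those at or after $v$ (using the convention $\vec{\eta}(0-)=0$ from the paper) yields the in-law identity
$$\A^- \eqd \A_{\mathrm{pre}} + e^{\omega_-\varsigma^-}\vec{\A},$$
in which $(\A_{\mathrm{pre}},\varsigma^-)$ is a measurable functional of the pre-supremum, hence independent of $\vec{\A}$.

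For (i), note that $\{\varsigma^-\le x\}=\{t^-(x)=\infty\}$, so conditioning on this event restricts the pre-supremum to a killed $\eta^+$ that never exceeds $x$. As $x\to 0+$ one has $\varsigma^-\to 0$ and $\A_{\mathrm{pre}}\to 0$ in conditional probability, while $\vec{\A}$ is unchanged (being independent of the pre-supremum) and admits a continuous distribution, inherited from Theorem \ref{thm:intro1} by the same Fourier argument. Slutsky's theorem then gives ${\mathcal P}(\A^->r\mid\varsigma^-\le x)\to{\mathcal P}(\vec{\A}>r)$ for every $r>0$.

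For (ii), start from \eqref{eq:tailA}, use $\{t^-(x)=\infty\}=\{\varsigma^-\le x\}$, and rearrange as
$$(1-e^{-x\omega_\Delta})\bigl[{\mathcal P}(\A^->r)-{\mathcal P}(\A^->r\mid\varsigma^-\le x)\bigr] = e^{-x\omega_\Delta}\bigl[{\mathcal P}(\A^+(x)+e^{x\omega_-}\A^->r)-{\mathcal P}(\A^->r)\bigr].$$
Divide by $x$ and let $x\to 0+$: the left-hand side converges to $\omega_\Delta[{\mathcal P}(\A^->r)-{\mathcal P}(\vec{\A}>r)]$ by (i). On the right-hand side, use the disjoint decomposition $\{\A^+(x)+e^{x\omega_-}\A^->r\}=\{e^{x\omega_-}\A^->r\}\sqcup\{\A^+(x)>r-e^{x\omega_-}\A^-\ge 0\}$; the first piece contributes $[{\mathcal P}(\A^->re^{-x\omega_-})-{\mathcal P}(\A^->r)]/x\to r\omega_-a^-(r)$ by the continuity of the density $a^-$ (Theorem \ref{thm:intro1}), and the second produces the limsup term. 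Rearrangement yields (ii).

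For (iii), the real obstacle, the plan is to establish $E[\vec{\A}^p]<\infty$ for some $p>\omega_\Delta/\omega_-$; Markov's inequality then gives ${\mathcal P}(\vec{\A}>r)\le r^{-p}E[\vec{\A}^p]=o(r^{-\omega_\Delta/\omega_-})$. Writing $\vec{\A}=\sum_i\vec{\gamma}_i\A_i$ with $\A_i$ i.i.d.\ having finite $p$-th moment for every $p<\omega_+/\omega_-$ (by \eqref{E:tailA}) and with weights $\vec{\gamma}_i\in(0,1]$ (because $\vec{\eta}\le 0$), Minkowski's inequality applied conditionally on $\vec{\eta}$ yields $E[\vec{\A}^p]\le E[\A^p]\cdot E[(\sum_i\vec{\gamma}_i)^p]$. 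The delicate step is the finiteness of this second factor, which I would treat via Campbell's compensation formula applied to the L\'evy system of the conditioned process $\vec{\eta}$, together with the fact that $\vec{\eta}(t)\to-\infty$ a.s., so that the exponential functional $\int_0^\infty e^{q\vec{\eta}(t)}\,dt$ has a finite mean for an appropriate range of $q>0$. Since $\omega_+/\omega_-=1+\omega_\Delta/\omega_-$, any $p$ strictly greater than $\omega_\Delta/\omega_-$ suffices to conclude.
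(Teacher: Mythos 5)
Your arguments for (i) and (ii) essentially reproduce the paper's: the same decomposition at the overall supremum, the same identification $\{t^-(x)=\infty\}=\{\varsigma^-\le x\}$, and the same rearrangement of \eqref{eq:tailA} followed by dividing by $x$ and letting $x\to 0+$. These are fine.

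For (iii), however, your plan has a genuine gap. You propose to show $\mathcal{E}[\vec{\A}^{\,p}]<\infty$ for some $p>\omega_{\Delta}/\omega_-$ and then invoke Markov's inequality, which would deliver a \emph{polynomial} improvement $\mathcal{P}(\vec{\A}>r)=O(r^{-p})$. But the assertion of (iii) is only $\mathcal{P}(\vec{\A}>r)=o(r^{-\omega_{\Delta}/\omega_-})$, a marginal improvement, and it is far from clear that the moment you aim for is in fact finite — indeed there is good reason to doubt it. The critical quantity is $\sum_i\vec\gamma_i$, which is an exponential-functional-type object for the post-supremum process $\vec\eta$. For the \emph{unconditioned} process $\eta^-$ one has $\mathcal{E}\big[\big(\int_0^{\infty}\e^{\omega_-\eta^-(t)}\,\d t\big)^p\big]<\infty$ if and only if $\Phi^-(\omega_-p)<0$, i.e.\ exactly when $p<\omega_{\Delta}/\omega_-$; the heavy tail there comes from trajectories lingering near $0$, which conditioning to stay negative does not forbid. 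Your sketch (``Campbell's formula for the L\'evy system of $\vec\eta$, plus $\vec\eta\to-\infty$'') does not engage with this borderline behaviour, and the fact that the statement being proved is a little-$o$ bound rather than a power-saving strongly suggests that the moment route overshoots what is available.

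The paper's proof of (iii) goes a completely different way and, crucially, \emph{uses part (ii) as an input}: the identity in (ii) gives the pointwise bound
$$\frac{\mathcal{P}(\vec{\A}>r)}{r}\;\leq\;\frac{\mathcal{P}(\A^->r)}{r}-\frac{\omega_-}{\omega_{\Delta}}\,a^-(r),$$
which after integrating over $r\in[R,\infty)$ produces two leading terms of order $R^{-\omega_{\Delta}/\omega_-}$ (computed from Lemma~\ref{LA}) that exactly cancel. This yields $\int_R^\infty \mathcal{P}(\vec{\A}>r)\,\d r/r = o(R^{-\omega_{\Delta}/\omega_-})$, and monotonicity of $r\mapsto\mathcal{P}(\vec{\A}>r)/r$ then converts the integral estimate into the pointwise one. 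This cancellation argument is precisely calibrated to produce the $o$-bound and avoids any discussion of moments of $\vec{\A}$. You should replace your plan for (iii) by this argument, or else actually prove the moment bound — which you have not done and which may well be false.
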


\begin{proof} (i) The path decomposition of $\eta^-$ at its overall supremum that has been presented above entails that 
the conditional distribution of $\A^-$ given $\varsigma^-=x$ is the same as $\A^+(x) + \e^x \vec{\A}$. 
Since $\lim_{x\to 0+}\A^+(x)=0$ in probability (this is seen e.g. from Lemma \ref{LC} (i)), the first assertion follows. 

(ii)  Rewrite \eqref{eq:tailA} as $$ {\mathcal P}(\A^->r) = \e^{-x \omega_{\Delta}}  {\mathcal P}(\A^+(x)+\e^{x\omega_-}\A^->r) + \left(1-\e^{-x \omega_{\Delta}} \right) {\mathcal P}(\A^->r\mid \varsigma^-\leq x).$$
Hence
$$ {\mathcal P}(\A^-\in(\e^{-x\omega_{-}}r, r]) =  \left(\e^{x \omega_{\Delta}}-1 \right) \left(  {\mathcal P}(\A^->r) - {\mathcal P}(\A^->r\mid \varsigma^-\leq x)\right) -{\mathcal P}(\A^+(x)>r- \e^{x \omega_-}\A^-\geq 0) . $$
Dividing by $x$ and then letting $x\to 0+$, we get (i) from the definition of $a^-$ in Lemma \ref{LA}.

(iii) We  claim that 
\begin{equation}\label{E:intarrow}
\lim_{R\to \infty} R^{\omega_{\Delta}/\omega_-} \int_R^{\infty} {\mathcal P}(\vec{\A}>r) \frac{\d r}{r} =0,
\end{equation}
from which the statement follows easily. Indeed, it suffices then to observe that, since the function $r\mapsto  {\mathcal P}(\vec{\A}>r)/r$ decreases, 
$$R^{\omega_{\Delta}/\omega_-} {\mathcal P}(\vec{\A}>R) \leq  2R^{\omega_{\Delta}/\omega_-} \int_{R/2}^{R} {\mathcal P}(\vec{\A}>r) \frac{\d r}{r} ,$$
and that the right-hand side goes to $0$ as $R\to \infty$ thanks to \eqref{E:intarrow}.

We need to check \eqref{E:intarrow}. By  (ii), for every $r>0$ we have 
$$\frac{ {\mathcal P}(\vec{\A}>r)}{r} \leq \frac{ {\mathcal P}(\A^->r)}{r} - \frac{\omega_-} {\omega_{\Delta}}a^-(r),$$
and it suffices to recall from Lemma \ref{LA} that as $R\to \infty$, one has 
$$ \int_R^{\infty} {\mathcal P}(\A^{-}>r) \frac{\d r}{r} \sim c \frac{\omega_+ \omega_-}{\omega_{\Delta}^2} R^{-\omega_{\Delta}/\omega_-}\ \text{and} \ \int_R^{\infty}a^-(r)\d r= {\mathcal P}(\A^{-}>R) \sim c \frac{\omega_+}{\omega_{\Delta}} R^{-\omega_{\Delta}/\omega_-}.
$$
This completes the proof.
\end{proof}

We are now ready to establish Theorem \ref{thm:intro2}.

\begin{proof}[Proof of Theorem \ref{thm:intro2}] We start with (i). By Lemma \ref{LD} (ii), (iii) and Lemma \ref{LC} (ii), we have $$\lim_{r\to \infty} r^{\omega_+/\omega_-} a^-(r)=   c\frac{\omega_+}{\omega_-},$$
where $c$ is the constant appearing in \eqref{E:tailA}.  The desired result follows from  Lemma \ref{LA}.

It remains to check that $a(r)>0$ for every $r>0$. In this direction, we first observe from Lemma \ref{LA} and the first part of the proof that we already know that $a(r)>0$ when $r$ is sufficiently large, say $r\geq R$. Fix any $r\in(0,R)$, recall the smoothing transform  \eqref{E:smoothA} and let $(g_i)_{i\geq 1}$ be any sequence 
of strictly positive real numbers with $\sum_{i\geq 1} g_i <\infty$ . We choose
$j\in\N$ sufficiently large such that $0<g_j<r/(2R)$.  The variable $g_j\A_j$ has density $g_j^{-1} a(\cdot/g_j)$ on $(0,\infty)$ and is independent of $\sum_{i\neq j} g_i \A_i $.
Thus for every $\varepsilon \in(0,1)$, we have
$${\mathcal P}\left(\sum_{i} g_i \A_i \in[r,r+\varepsilon)\right) 
\geq \frac{ \varepsilon }{g_j} \inf\{a(s): r/2\leq sg_j \leq r+1\} {\mathcal P}\left(\sum_{i\neq j} g_i \A_i < r/2\right)
\,.$$
On the one hand, $r/(2g_j)>R$ and therefore $\inf\{a(s): r/2\leq s g_j \leq r+1\}>0$. On the other hand, Theorem 2 of Biggins and Grey \cite{BigGrey} ensures that     ${\mathcal P}(\A_i<b)>0$ for every $b>0$ and $ i \geq 1$. It easily follows  that 
$${\mathcal P}\left(\sum_{i\neq j} g_i \A_i < r/2\right) >0\,.$$
We conclude that 
$$\liminf_{\varepsilon \to 0+} \frac{1}{ \varepsilon }{\mathcal P}\left(\sum_{i} g_i \A_i \in[r,r+\varepsilon)\right)  >0.$$
By conditioning the smoothing transform \eqref{E:smoothA} on the sequence $(\gamma^{\omega_-}_i)_{i\geq 1}=(g_i)_{i\geq 1}$ and applying Fatou's lemma,
we now see that indeed $a^-(r)>0$. 
\end{proof}

We conclude this section with the proof of Lemma \ref{LC}.

\begin{proof}[Proof of Lemma \ref{LC}] We first note, that, since  $\sup_{t>0} t^p /(\e^{\theta t}-1)<\infty$ for any $\theta >0$ and any $p\geq 1$,  
 \eqref{E:tempspass}  yields
 \begin{equation}\label{E:tempspassmom} 
 E((t^+(x))^p) = O(x) \qquad \text{as }x\to 0+.
 \end{equation}

By the L\'evy-It\=o decomposition, the point process describing the jumps of the L\'evy process $\eta^+$ is Poisson with intensity $\d t \Pi^+(\d y)$, where $\Pi^+$ denotes the L\'evy measure of $\eta^+$. We mark further each jump, say, $(t,\Delta \eta^+(t))$, with an independent variable $\A(t)$ having the law of $\A$ under ${\mathcal P}_1$, and  obtain a Poisson point process with intensity $\d t \Pi^+(\d y) a(r) \d r$. In this setting, we consider the process
$$N(t)\coloneqq \sum_{0< s \leq  t}|\Delta \e^{\eta^{+}}(s)|^{\omega_-}\A(s)= \sum_{0< s \leq t}\exp({\omega_-\eta^{+}}(s-))(1-\e^{\Delta \eta^{+}(s)})^{\omega_-}\A(s), \qquad t\geq 0.$$
Note that only instants $s$ at which $\eta^+$ jumps contribute to the sum, 
and  that $\A^+(x)$ has the same law as $N(t^+(x))$. 

Next observe from Lemma 2.1(i) in \cite{BBCK} and the fact that $\kappa(p\omega_-)<\infty$ that
$$c_p\coloneqq \int_{(-\infty,0)} (1-\e^y)^{p\omega_-}\Pi^+(\d y) <\infty.$$
Recall also from \eqref{E:tailA} that  ${\mathcal E}_1(\A^p)<\infty$ since $p<\omega_+/\omega_-$, and 
that for $p=1$, ${\mathcal E}_1(\A)=1$. 
This allows us to also consider  the compensated sum
$$N^{(c)}(t)\coloneqq N(t) - c_1 \int_0^t\exp({\omega_-\eta^{+}}(s))\d s, \qquad t\geq 0,$$
which is a purely discontinuous martingale. 
Plainly, 
$$\int_0^{ t^+(x)}\exp({\omega_-\eta^{+}}(s)) \d s\leq  t^+(x) \e^{x\omega_-},$$
and  \eqref{E:tempspassmom} entails
$$E\left(\left(\int_0^{ t^+(x)}\exp({\omega_-\eta^{+}}(s)) \d s\right)^p\right) = O(x)\qquad \text{as }x\to 0+.$$
So to complete the proof, we just need to establish that 
$$E((N^{(c)}(t^+(x))^p)=O(x)\qquad \text{as }x\to 0+.$$ 

In this direction, we  shall use Burkholder-Davis-Gundy inequalities for the martingale $N^{(c)}$
and need therefore to show that
$$E\left( [N^{(c)}]^{p/2}(t^+(x))\right) =O(x)\qquad \text{as }x\to 0+,$$ 
where $[N^{(c)}]$ stands for the quadratic variation of the purely discontinuous martingale $N^{(c)}$. 
We first suppose $p\leq 2$, and use the bound
$$[N^{(c)}]^{p/2} (t) = \left(\sum_{0< s \leq t} |\Delta N^{(c)}(s)|^2\right)^{p/2} \leq 
\sum_{0< s \leq t} |\Delta N^{(c)}(s)|^p
= \sum_{0< s \leq t}\exp({p\omega_-\eta^{+}}(s-))(1-\e^{\Delta \eta^{+}(s)})^{p\omega_-}\A^p(s).$$
A calculation by compensation similar to that at the beginning of the proof shows that 
the expectation of this quantity evaluated for $t=t^+(x)$ is bounded from above by
$$\e^{xp\omega_-}c_p{\mathcal E}_1(\A^p)E((t^+(x))^p),$$
and again thanks to \eqref{E:tempspassmom}, this quantity is $O(x)$.

The case $p\in(2,4]$ is mostly similar. We first compensate $ [N^{(c)}]$ to get a martingale, and
proceed as above. Iteratively one deals with any $p\in(1, \omega_+/\omega_-)$. Details are left to the reader, we also refer to Lemma 2.3 in \cite{BBCK} for a similar argument. 

For the second assertion,  recall from Lemma \ref{LA} that $\A^-$ has  density $a^-$, and from Lemma \ref{LB} that $\A^+(x)$ and $\A^-$ are independent. For every $R>0$, we have
$$
{\mathcal P}(\A^+(x)>2R- \e^{x \omega_-}\A^-\geq 0) = \e^{x\omega_-}\int_0^{2R}  a^-(r\e^{x\omega_-}) {\mathcal P}(\A^+(x)>2R-r)\d r.\\
$$
Splitting the integral at $R$ and then using the change of variables $s=2R-r$
shows that the right-hand side is bounded from above by 
$$ {\mathcal P}(\A^+(x)>R) + \e^{x\omega_-}  \int_0^{R} a^-((2R-s)\e^{x\omega_-}) {\mathcal P}(\A^+(x)>s)\d s 
\quad	\leq \quad  {\mathcal P}(\A^+(x)>R) + \sup_{r\geq R}a^-(r)\times  {\mathcal E}(\A^+(x)). 
$$
Then take any  $p'\in(p, \omega_+/\omega_-)$ and note from  Markov's inequality and (i) that 
$${\mathcal P}(\A^+(x)>R)\leq R^{-p'}  {\mathcal E}((\A^+(x))^{p'})
\quad \text{and} \quad   {\mathcal E}((\A^+(x))^{p'}) =   O(x) \qquad \text{as  $x\to 0+$ and $R\to \infty$} .$$
Therefore, by (i), to finish the proof, it remains to check that
\begin{equation}
\label{eq:limsup}
\limsup_{r\to \infty} r^{\omega_+/\omega_-} a^-(r) \leq   c{\omega_+}/{\omega_-}.
\end{equation}

To this end, we use \eqref{eq:tailA} to get that, for every $\varepsilon>0$, $ {\mathcal P}(\A^->r) \geq \e^{-\varepsilon \omega_{\Delta}}  {\mathcal P}(\e^{\varepsilon\omega_{-}}\A^->r) $,
and deduce the upper-bound 
$$ {\mathcal P}(\A^-\in(\e^{-\varepsilon\omega_{-}}r, r]) \leq  \left(\e^{\varepsilon \omega_{\Delta}}-1 \right)  {\mathcal P}(\A^->r). $$
Dividing the expression above by $\varepsilon$ and then letting $\varepsilon\to 0+$ gives 
that for all $r>0$, $ r a^-(r) {\omega_-} \leq {\omega_{\Delta}} {\mathcal P}(\A^->r)$.  The inequality \eqref{eq:limsup} then follows from Lemma \ref{LA}. This completes the proof.
\end{proof}

\subsection{An alternative approach to Theorem \ref{thm:intro2}} \label{sec:discussion}
The above proof of Theorem \ref{thm:intro2} crucially relies on the absence of positive jumps for the driving self-similar Markov processes. In view of a generalization of Theorem \ref{thm:intro2}, we pave here a possible route in the  presence of positive jumps, and more generally for variables satisfying a ``nice'' random affine equation. We do not carry the details and stay at the level of a discussion.  
\medskip

Let us consider a random affine equation of the form
 \begin{eqnarray} \A^- &\overset{( \mathrm{d})}{=}& \A^- U + V,  \end{eqnarray}
 where $\A^-$ is independent of the pair $(U,V)$. We assume that $\A^-$ has a density (see \cite{leckey2019densities} for  sufficient conditions), denoted by $a^-$. This time, instead of focusing on the case where $U$ only takes two values as in Section \ref{ssec:asymp}, we shall rather focus on the case where \begin{center}$(U,V)$ has a continuous density over $ \mathbb{R}_+^2$. \end{center}  In the case of growth-fragmentations and under mild assumptions on the driving SSMP, this should be achievable by stopping the Eve particle at a fixed time, or when it overshoots a random level. As in Section \ref{ssec:asymp}, let us suppose that $ \mathbb{E}[U^\rho] =1$ (and also  $ \mathbb{E}[U^{\rho + \varepsilon}] < \infty$ and $ \mathbb{E}[V^{\rho+ \varepsilon}] < \infty$) with $\rho = \frac{\omega_\Delta}{\omega_-}$ so that we are in position to apply the Kesten-Grincevi\v cius-Goldie theorem (see \cite[Theorem 2.4.4]{BDM}), and recover Lemma \ref{LA}:
 $$ \int_R^\infty \mathrm{d}r\,  a^-(r) = \mathcal{P}( \A^- > R) \sim  \frac{c}{\rho} R^{-\rho}.$$ Now, if $f(u,v)$ denotes the joint density of $(U,V)$, the random affine equation shows that for any positive measurable function $\psi$ we have 
 $$ \int_{0}^{\infty} \mathrm{d}r \, a^-(r) \psi(r) = \int_{0}^{\infty} \mathrm{d}s \int_{0}^{\infty} \mathrm{d}u \int_{0}^{\infty} \mathrm{d}v  \ \psi(su + v) f(u,v) a^-(s),$$
 so that after performing the change of variable $r=su+v$ and eliminating $u$ in the right-hand side we deduce the following integral equation for the density of $a^-(r)$:
 \begin{eqnarray} a^-(r) &=& \int_{0}^{\infty} \mathrm{d}s  \frac{a^-(s)}{s}\int_{0}^{\infty} \mathrm{d}v f\left( \frac{r-v}{s}, v\right).  \end{eqnarray}
Our goal is now to use the continuity of $f$ to ``average" the possible roughness of the density $a^-$ in the right-hand side.

Let us show how to deduce an asymptotic lower bound on $a^-$ using this approach. First, for any $ \varepsilon \in (0,1)$ by the above display the quantity $a^{-}(r)$ is bounded from below by $$ \int_{ \varepsilon r}^{ \varepsilon^{-1} r} \mathrm{d}s  \frac{a^-(s)}{s}\int_{ \varepsilon}^{ \varepsilon^{-1}} \mathrm{d}v \, f\left( \frac{r-v}{s}, v\right) \underset{\fbox{$s=zr$}}{=}  r^{-\rho-1}\int_{ \varepsilon}^{ \varepsilon^{-1}} \mathrm{d}z \, \underbrace{r^{\rho+1}a^-(zr) z^{-1}}_{=: g_r(z)} \underbrace{\int_{ \varepsilon}^{ \varepsilon^{-1}}  \mathrm{d}v \, f\left( \frac{r-v}{z r}, v\right)}_{=: F_r(z)}.$$ 
Using the continuity of $f$, as $r \rightarrow \infty$, we deduce that $F_r(z)$ converges uniformly on $( \varepsilon, \varepsilon^{-1})$ towards the continuous function $F(z) = \int_{ \varepsilon}^{ \varepsilon^{-1}}  \mathrm{d}v f(z^{-1},v)$.  On the other hand, the asymptotics $\mathcal{P}( \A^- > R) \sim  \frac{c}{\rho} R^{-\rho}$ shows that $g_r(z)$ converges weakly towards $g(z)= c  z^{-\rho-2}$ in the sense that for every continuous function $\phi$ we have 
 $$ \int_ \varepsilon^{ \varepsilon^{-1}} \mathrm{d}z\,  g_r(z) \phi(z)   \quad \mathop{\longrightarrow}_{r \rightarrow \infty} \quad  \int_ \varepsilon^{ \varepsilon^{-1}} \mathrm{d}z\,  g(z) \phi(z).$$ 
Gathering those two convergences  in the penultimate display we deduce that
$$ \liminf_{r \to \infty} a^-(r) r^{\rho+1} \geq \lim_{r \to \infty} \int_{ \varepsilon}^{ \varepsilon^{-1}} \mathrm{d}z\ g_r(z) F_r(z) = \int_{ \varepsilon}^{ \varepsilon^{-1}} \mathrm{d}z\, g(z) F(z) = c \int_{ \varepsilon}^{ \varepsilon^{-1}} \mathrm{d}z \, z^{-\rho-2} \int_{ \varepsilon}^{ \varepsilon^{-1}} \mathrm{d}v f(z^{-1}, v).$$
Letting $ \varepsilon \to 0$ in the last integral we recognize $ \mathbb{E}[U^\rho]=1$ after the change of variable $u= 1/z$. This indeed shows that $a^-(r) \geq {c} r^{-\rho-1}$ asymptotically as desired. Getting an asymptotic upper bound on $a^-$ needs more assumptions on $f$ and we shall leave this for further research.

\subsection{Series of independent intrinsic areas}

In this section, we will turn our attention to infinite weighted sums of  i.i.d.~copies of $\A$. This will be useful in order to consider other initial conditions, as will be needed in Section \ref{sec:conditioning}.

Recall that $(\A_i)_{i\geq 1}$ denotes a sequence of i.i.d.~copies of $\A$ under
${\mathcal P}_{1}$ and $\ell^{\omega_-}_+$ the space of sequences  $\x=(x_1, \ldots)$ with nonnegative terms such that 
$\sum_i x_i^{\omega_-}<\infty$. Since ${\mathcal E}_1(\A)=1$, the series
$$\A_{\x} \coloneqq\sum_{i=1}^{\infty} x_i^{\omega_-}\A_i$$
converges a.s. and in $L^1({\mathcal P})$ for every $\x\in \ell^{\omega_-}_+$, and the purpose of this section is to investigate the distribution of such random variables.
In this direction, we immediately deduce from Theorem \ref{thm:intro1} that for every sequence $\x\in \ell^{\omega_-}_+$ different from the null sequence, $\x\neq {\mathbf 0}\coloneqq(0,0, \ldots)$, the distribution of the random variable $\A_{\x}$
  is also absolutely continuous. More precisely, we have 
   $${\mathcal P}(\A_{\x}\in \d r) = a(\x,r) \d r,$$
where the density $ a(\x,\bullet)$ belongs to ${\mathcal C}^{\infty}_0(\R_+^*)$. In particular, 
$a({\mathbf 1},r)= a(r)$ where ${\mathbf 1}=(1,0,0, \ldots)$. 

\begin{lemma}\label{LB0} The map $\x\mapsto a(\x,\bullet)$ is continuous from $\ell^{\omega_-}_+\backslash \{ {\mathbf 0}\}$
to the space ${\mathcal C}_0(\R_+^*)$ of continuous functions on $(0,\infty)$ vanishing both at $0$ and at $\infty$. 
\end{lemma}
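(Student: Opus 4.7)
The plan is to control $a(\x,\cdot)$ through Fourier inversion. Let $\phi(\theta) := \mathcal{E}_1(e^{i\theta\A})$; by independence of the sequence $(\A_i)_{i\geq 1}$ the characteristic function of $\A_\x$ is $\phi_\x(\theta) = \prod_{i\geq 1}\phi(x_i^{\omega_-}\theta)$, so Fourier inversion gives
$$a(\x,r) \;=\; \frac{1}{2\pi}\int_\R e^{-i\theta r}\phi_\x(\theta)\,\d\theta.$$
The crucial observation is the $r$-uniform bound
$$\sup_{r>0}|a(\x_n,r)-a(\x,r)| \;\leq\; \frac{1}{2\pi}\int_\R |\phi_{\x_n}(\theta)-\phi_\x(\theta)|\,\d\theta,$$
so continuity in the sup norm will follow from dominated convergence applied to the right-hand side.

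The dominating function comes from the assumption $\x\neq\mathbf{0}$: fix any index $i_0$ with $x_{i_0}>0$; by the convergence $\x_n\to\x$ one has $x_{n,i_0}^{\omega_-}\geq c := \tfrac{1}{2} x_{i_0}^{\omega_-}$ for all $n$ large, and since $|\phi|\leq 1$ one can retain only that factor,
$$|\phi_{\x_n}(\theta)| \;\leq\; |\phi(x_{n,i_0}^{\omega_-}\theta)| \;\leq\; |\phi(c\theta)|.$$
The proof of Theorem \ref{thm:intro1} has already shown that $|\phi(u)|=O(|u|^{-b})$ for every $b>0$, so $|\phi(c\,\cdot)|$ is integrable on $\R$; the same estimate bounds $|\phi_\x|$. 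Pointwise convergence $\phi_{\x_n}(\theta)\to\phi_\x(\theta)$ will follow from the $L^1$ estimate
$$\mathcal{E}\bigl|\A_{\x_n}-\A_\x\bigr| \;\leq\; \mathcal{E}_1(\A)\sum_{i\geq 1} |x_{n,i}^{\omega_-}-x_i^{\omega_-}| \;=\; \sum_{i\geq 1} |x_{n,i}^{\omega_-}-x_i^{\omega_-}|,$$
once one knows that $\x_n\to\x$ in $\ell^{\omega_-}_+$ implies the latter series tends to $0$.

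Dominated convergence then delivers $\int_\R|\phi_{\x_n}-\phi_\x|\,\d\theta\to 0$, hence uniform convergence of the densities, and since each $a(\x,\cdot)$ already belongs to $\mathcal{C}_0(\R_+^*)$, this is exactly the required convergence. The main obstacle I anticipate is purely technical: justifying $\sum_i|x_{n,i}^{\omega_-}-x_i^{\omega_-}|\to 0$ from the ambient topology on $\ell^{\omega_-}_+$. For the natural metric, coordinatewise convergence together with convergence of $\sum_i x_{n,i}^{\omega_-}$ yields this by Scheffé's lemma applied on $\mathbb{N}$ equipped with counting measure. Should any subtlety arise, an equivalent route is to establish $\A_{\x_n}\to\A_\x$ in probability by truncating the series at a large level $N$, applying Lévy's continuity theorem to the leading finitely many terms, and using uniform $L^1$-smallness of the tail $\sum_{i>N}x_{n,i}^{\omega_-}\A_i$ for $N$ large, which delivers the pointwise convergence of characteristic functions without invoking Scheffé.
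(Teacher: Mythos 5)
Your argument follows the same route as the paper: express $\phi_\x(\theta)=\prod_j\phi(x_j^{\omega_-}\theta)$, note that $\A_{\x_n}\to\A_\x$ in $L^1$ gives pointwise convergence of characteristic functions, bound the product by a single factor, and apply dominated convergence to get $L^1(\d\theta)$-convergence before Fourier inversion. One small inaccuracy: the chain $|\phi(x_{n,i_0}^{\omega_-}\theta)|\leq|\phi(c\theta)|$ is not justified, since $|\phi|$ is not monotone; the correct dominating function comes directly from the decay estimate $\phi(u)=O(|u|^{-b})$ (plus $|\phi|\leq 1$), which gives $|\phi(x_{n,i_0}^{\omega_-}\theta)|\leq\min\bigl(1,\,C\,c^{-b}|\theta|^{-b}\bigr)$ uniformly in $n$ large once $x_{n,i_0}^{\omega_-}\geq c$ — this is exactly the dominating function the paper has in mind.
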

\begin{proof} Write $\phi(\theta)={\mathcal E}_1(\e^{i \theta \A})$ for the characteristic function of $\A$
under ${\mathcal P}_1$, so the characteristic function of $a(\x,\bullet)$ for $\x=(x_1, x_2, \ldots)$ is
$$\phi(\x,\theta) = \prod_{j=1}^{\infty} \phi(x_j^{\omega_-}\theta), \qquad \theta \in \R.$$
Consider a sequence $(\x(n))_{n\geq 1}$ in $\ell^{\omega_-}_+$ that converges to $\x\neq {\mathbf 0}$. Since 
$\A_{\x(n)}$ converges to $\A_{\x}$ in $L^1({\mathcal P})$, $\phi(\x(n),\theta)$ converges pointwise to $\phi(\x,\theta)$.  By Fourier inversion, it suffices to check
that this convergence also holds  in $L^1(\d \theta)$.

Consider an index $j\geq 1$ with $x_j>0$, so for all $n$ sufficiently large, we also have $x_j(n)>x_j/2$. 
Using the bound $|\phi(\x(n),\theta)| \leq |\phi(x_j(n)\theta)|$ and, from the proof of Theorem \ref{thm:intro1}, the fact 
that $\phi(\theta)=O(|\theta|^{-b})$ as $|\theta |\to \infty$ for every $b>0$,   we see that dominated convergence applies, and the proof is complete. 
\end{proof} 

The tail-behavior of $\A_{\x}$ can be deduced from \eqref{E:tailA}, at least in the case when $\omega_+/ \omega_-\leq 2$. Indeed  Lemma A.4 in \cite{MikSamorod} shows that then
\begin{equation}\label{E:tailAx}
{\mathcal P}(\A_{\x}> r) \sim c \left(\sum_{j=1}^{\infty} x_j^{\omega_+}\right) r^{-\omega_+/\omega_-}\qquad \text{as }r\to \infty.
\end{equation}
We  turn our attention to the more delicate question of the  asymptotic behavior of the density $a(\x, \cdot)$. 
When the sequence $\x$ has only finitely many non-zero terms, 
one easily obtains  the following extension of Theorem \ref{thm:intro2}.

\begin{corollary} \label{CexT2} For every $\x\in \ell^{\omega_-}_+\backslash \{ {\mathbf 0}\}$
with ${\mathrm Card}\{j\geq 1: x_j>0\}<\infty$, we have
$$\lim_{r\to \infty} r^{1+\omega_+/\omega_-} a(\x,r)=  c\frac{\omega_+}{\omega_-}\sum_{j=1}^{\infty} x_j^{\omega_+},$$
where $c$ is the constant appearing in \eqref{E:tailA}.
\end{corollary}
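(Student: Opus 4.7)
Set $\rho := \omega_+/\omega_-$ and $K := c\,\omega_+/\omega_-$, so that Theorem \ref{thm:intro2}(i) reads $a(r) \sim K r^{-1-\rho}$ as $r\to\infty$. Let $x_1,\ldots,x_n$ be the finitely many non-zero entries of $\x$. By scaling, $Y_i := x_i^{\omega_-} \A_i$ has density $f_i(r) = x_i^{-\omega_-} a(r/x_i^{\omega_-})$, and a direct computation from Theorem \ref{thm:intro2}(i) gives
$$f_i(r) \;\sim\; K x_i^{\omega_+}\, r^{-1-\rho} \qquad \text{as } r\to\infty,$$
using $\omega_-\rho = \omega_+$. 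Since the $\A_i$ are independent, $a(\x,\cdot) = f_1 * \cdots * f_n$.

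Hence the corollary reduces to the following density version of the one-big-jump principle, which I would state and prove as a lemma: \emph{if $f_1, \ldots, f_n$ are bounded probability densities on $[0,\infty)$ with $f_i(r) \sim K_i r^{-1-\rho}$ as $r\to\infty$ for some fixed $\rho>0$ and constants $K_i\ge 0$, then}
$$(f_1 * \cdots * f_n)(r) \;\sim\; \Bigl(\sum_{i=1}^n K_i\Bigr) r^{-1-\rho}.$$
Both hypotheses are at hand here: the boundedness of $f_i$ follows from $a \in \mathcal{C}^\infty_0(\R_+^*)$ (Theorem \ref{thm:intro1}), and the tail asymptotic from Theorem \ref{thm:intro2}(i). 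Applying the lemma with $K_i = Kx_i^{\omega_+}$ is exactly the sought-after statement.

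I would prove the lemma by induction on $n$; only $n=2$ is substantial. Fix $\varepsilon\in(0,1/2)$ and split
$$(f_1*f_2)(r) \;=\; \int_0^{\varepsilon r} + \int_{\varepsilon r}^{(1-\varepsilon) r} + \int_{(1-\varepsilon) r}^r f_1(r-s) f_2(s)\,\d s.$$
On $[0,\varepsilon r]$ one has $r-s\in[(1-\varepsilon)r,r]$, so uniformity of the asymptotic for $f_1$ on $[(1-\varepsilon)r,r]$ together with $\int_0^{\varepsilon r} f_2(s)\,\d s \to 1$ yields the first integral $\sim K_1(1-\varepsilon)^{-1-\rho}(1+o(1))\cdot r^{-1-\rho}$. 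The symmetric region $[(1-\varepsilon)r,r]$ contributes $\sim K_2(1-\varepsilon)^{-1-\rho}(1+o(1))\cdot r^{-1-\rho}$. On the middle region both factors are $O(r^{-1-\rho})$ by boundedness plus the tail bound, and the interval has length at most $r$, so that integral is $O(r^{-1-2\rho}) = o(r^{-1-\rho})$, which is where the assumption $\rho>0$ enters. Letting first $r\to\infty$ and then $\varepsilon\to 0$ closes the proof.

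The only technical point requiring care is the uniform rewriting $f_1(r-s) = K_1(r-s)^{-1-\rho}(1+o(1))$ for $s\in[0,\varepsilon r]$, but this is immediate from $a(u)\sim Ku^{-1-\rho}$ applied at $u=(r-s)/x_1^{\omega_-}\ge (1-\varepsilon)r/x_1^{\omega_-}$, which tends to $\infty$ uniformly in $s$. Beyond the bookkeeping of the $\varepsilon$'s I do not foresee any genuine obstacle; note that one may alternatively invoke a known density analogue of Lemma A.4 in \cite{MikSamorod}, but the direct argument above is shorter.
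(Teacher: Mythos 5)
Your proposal is correct and follows essentially the same strategy as the paper's proof: after observing that the $n=1$ case follows from Theorem \ref{thm:intro2}(i) by scaling, one proceeds by induction on the number of non-zero entries, and the inductive step amounts to a density version of the one-big-jump principle for the convolution of two densities with regularly-varying tails. The only difference is that the paper cites the needed convolution asymptotic as Theorem 2.2 of \cite{Luxemburg}, whereas you prove it from scratch via the standard three-region splitting; your argument is sound (the first region is really sandwiched between $K_1 r^{-1-\rho}(1+o(1))$ and $K_1(1-\varepsilon)^{-1-\rho} r^{-1-\rho}(1+o(1))$ rather than asymptotically equal to the latter, but as you note this washes out on letting $\varepsilon\to 0$ after $r\to\infty$). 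One small point worth making explicit in the inductive step: you need $f_1*\cdots*f_{n-1}$ to remain bounded, which holds since $\|g*h\|_\infty\le\|g\|_\infty\|h\|_1=\|g\|_\infty$ when $h$ is a probability density.
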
 
\begin{proof}
 In the case when sequence $\x$ has a single non-zero term, say $\x=(x_1,0,0, \ldots)$ with $x_1>0$, then the claim follows immediately from Theorem \ref{thm:intro2} since then $x_1^{\omega_-}\A_1$ has the density
\begin{equation}\label{E:asympabis}
 x_1^{-\omega_-} a(rx_1^{-\omega_-}) \sim 
 c\frac{\omega_+}{\omega_-} x_1^{\omega_+} r^{-1-\omega_+/\omega_-}, \qquad \text{as }r\to \infty.
 \end{equation}
 
 Let $m\geq 1$ and suppose that the assertion in the statement holds provided that the sequence $\x$ 
 has at most  $m$ non-zero terms. Consider a sequence $\x=(x_1, \ldots, x_m, x_{m+1},0, 0, \ldots)$
 with $x_1\cdots x_{m+1}>0$. Let $\x'=(x_2, \ldots, x_{m+1}, 0, 0, \ldots)$, so 
 $\A_{\x}$ has the same law as $x_{1}^{\omega_-} \A_1+\A_{\x'}$, where the variables $\A_1$  and $\A_{\x'}$ are implicitly assumed to be independent. Thanks to our assumption,  we have then
$$a(\x',r)\sim  c\frac{\omega_+}{\omega_-}\left(\sum_{j=2}^{m+1} x_j^{\omega_+}\right)  r^{-1-\omega_+/\omega_-} ,\qquad \text{as }r\to \infty.$$
Combining this with \eqref{E:asympabis} and an easy estimate on convolutions of densities with heavy tails which can be found for instance Theorem 2.2 in \cite{Luxemburg} (beware however of a misprint, the second integral $\int_0^{\infty}g(t)\d t$ there should be replaced by $\int_0^{\infty}f(t)\d t$), we have 
$$a(\x,r)\sim a(\x',r) + x_1^{-\omega_-} a(rx_1^{-\omega_-})\qquad \text{ as $r\to \infty$,}$$
and this  proves our claim by induction. 
\end{proof}
 
 We conjecture that  Corollary \ref{CexT2} holds even when $\x$ has infinitely many positive terms,  but we have not been able to prove this rigorously (the difficulty lies in interchanging limits). The proof of the forthcoming Corollary \ref{C5} provides some support to this conjecture, as it will be shown that it holds indeed for almost-all  (with respect to distributions of cell systems) $\x\in \ell^{\omega_-}_+$.  
 The following lower-bound is rather easy and will however be  sufficient for our purposes.
 
 \begin{corollary} \label{CexT2'} For every $\x\in \ell^{\omega_-}_+\backslash \{ {\mathbf 0}\}$, we have
$$\liminf_{r\to \infty} r^{1+\omega_+/\omega_-} a(\x,r)\geq  c\frac{\omega_+}{\omega_-}\sum_{j=1}^{\infty} x_j^{\omega_+},$$
where $c$ is the constant appearing in \eqref{E:tailA}.
\end{corollary}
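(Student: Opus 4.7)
The plan is to deduce the general lower bound from the sharp asymptotics already established for finitely supported sequences in Corollary \ref{CexT2}, via truncation and Fatou's lemma applied to a convolution representation of the density $a(\x,\cdot)$.

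Fix $\x=(x_1,x_2,\ldots)\in\ell^{\omega_-}_+\setminus\{\mathbf{0}\}$ and, for each $n\geq 1$, introduce the truncation $\x_n\coloneqq(x_1,\ldots,x_n,0,0,\ldots)$ together with the tail sum $R_n\coloneqq\sum_{i>n}x_i^{\omega_-}\A_i$. By construction, $\A_{\x_n}$ and $R_n$ are independent, nonnegative, and $\A_{\x}=\A_{\x_n}+R_n$ in distribution. Since $\A_{\x_n}$ has density $a(\x_n,\cdot)$ (assuming $n$ is large enough that $\x_n\neq\mathbf{0}$, which is possible since $\x\neq\mathbf{0}$), a standard convolution identity gives, for every $r>0$,
\begin{equation*}
a(\x,r)\ =\ \int_{[0,r)} a(\x_n,r-s)\,\mu_{R_n}(\d s),
\end{equation*}
where $\mu_{R_n}$ denotes the law of $R_n$.

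Multiply both sides by $r^{1+\omega_+/\omega_-}$ and observe that, for each fixed $s\geq 0$, Corollary \ref{CexT2} yields
\begin{equation*}
\mathbf{1}_{\{s<r\}}\,r^{1+\omega_+/\omega_-}\,a(\x_n,r-s)\ =\ \mathbf{1}_{\{s<r\}}\left(\tfrac{r}{r-s}\right)^{1+\omega_+/\omega_-}(r-s)^{1+\omega_+/\omega_-}a(\x_n,r-s)\ \xrightarrow[r\to\infty]{}\ c\,\frac{\omega_+}{\omega_-}\sum_{j=1}^n x_j^{\omega_+}.
\end{equation*}
Since the integrand is nonnegative, Fatou's lemma applied to the measure $\mu_{R_n}$ gives
\begin{equation*}
\liminf_{r\to\infty} r^{1+\omega_+/\omega_-}\,a(\x,r)\ \geq\ c\,\frac{\omega_+}{\omega_-}\sum_{j=1}^n x_j^{\omega_+}.
\end{equation*}
This bound holds for every $n$ large enough that $\x_n\neq\mathbf{0}$. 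Letting $n\to\infty$ and invoking monotone convergence on the right-hand side completes the proof.

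The argument is genuinely one-sided: Fatou only gives the lower bound, and the matching upper bound would require a uniform domination of $r^{1+\omega_+/\omega_-}a(\x_n,r-s)$ by some $\mu_{R_n}$-integrable function, which is the very obstruction signaled in the paragraph preceding the corollary. Here, this does not pose any problem because we only claim a $\liminf$ lower bound.
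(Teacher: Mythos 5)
Your proof is correct, and it takes a genuinely different route from the paper's. You deduce the general case from the finitely supported case (Corollary \ref{CexT2}) by writing $\A_{\x}=\A_{\x_n}+R_n$ for independent summands, representing $a(\x,\cdot)$ as the convolution of the density $a(\x_n,\cdot)$ with the law $\mu_{R_n}$ of the tail $R_n$, and applying Fatou's lemma pointwise in $s$; the monotone-convergence step in $n$ then lets the finite sums $\sum_{j\le n}x_j^{\omega_+}$ increase to the full sum. The paper instead argues directly from the single-term asymptotics of Theorem \ref{thm:intro2}: it fixes a truncation level $r_0$, introduces pairwise disjoint events $\Omega_j(r_0,r,\varepsilon)$ on which the $j$-th summand $x_j^{\omega_-}\A_j$ lands in a window while the complementary sum $\A_{\x(j)}$ stays below $r_0$, lower-bounds each ${\mathcal P}(\Omega_j)$ using the density of $x_j^{\omega_-}\A_j$, sums over $j$, then sends $\varepsilon\to 0$ and $r_0\to\infty$; it never invokes Corollary \ref{CexT2}. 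Your proof is arguably cleaner and more modular, since it treats the finitely supported asymptotics as a black box and the convolution-plus-Fatou pattern is standard; the paper's argument is slightly more self-contained (it bypasses the induction of Corollary \ref{CexT2}) and the disjointness of the events $\Omega_j$ makes the ``one-sidedness'' of the bound especially transparent. Both correctly identify that the obstruction to the matching upper bound is the lack of a dominated-convergence-type control, which you note explicitly.

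One small precision you might add: since $0<\omega_-<\omega_+$, the inclusion $\ell^{\omega_-}_+\subset\ell^{\omega_+}_+$ guarantees that $\sum_j x_j^{\omega_+}<\infty$, so the right-hand side is finite; and since $a(\x_n,\cdot)$ vanishes at $0$ (being in ${\mathcal C}^\infty_0(\R_+^*)$), the choice of integration domain $[0,r)$ versus $[0,\infty)$ in the convolution formula is immaterial.
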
 
 
 \begin{remark} At least when $\omega_+/ \omega_-\leq 2$, the inequality of Corollary \ref{CexT2'} is actually an equality, since 
 a strict inequality would contradict \eqref{E:tailAx}. 
 \end{remark}
 
\begin{proof}
For $j\geq 1$, write $\A_{\x(j)}= \A_{\x} - x_j^{\omega_-}\A_j$. Fix  $\varepsilon>0$,
and note that for every $r_0>0$ and $r>2r_0$, the events
$$\Omega_j(r_0,r,\varepsilon)=\left\{\A_{\x}\in[r,r+\varepsilon), \A_{\x(j)}\leq r_0\right\}=
\left\{x_j^{\omega_-}\A_j +\A_{\x(j)}\in[r,r+\varepsilon), \A_{\x(j)}\leq r_0\right\}
$$
are pairwise disjoint for $j\geq 1$.  For any index $j$ with $x_j\neq 0$,
since $\A_j $ and $\A_{\x(j)}$ are independent, since $x_j^{\omega_-}\A_j$ has the density 
$x_j^{-\omega_-}a (x_j^{-\omega_-}\cdot)$, and since $\A_{\x(j)}\leq \A_{\x}$,
there is the lower-bound
\begin{align*}
{\mathcal P}\left(\Omega_j(r_0,r,\varepsilon)\right) &\geq \varepsilon {\mathcal P}(\A_{\x(j)}\leq r_0)
\min \{ x_j^{-\omega_-}a(sx_j^{-\omega_-}): r-r_0\leq s \leq r+\varepsilon\}\\
&\geq \varepsilon {\mathcal P}(\A_{\x}\leq r_0) x_j^{\omega_+} \left({r+\varepsilon}\right)^{-1-\omega_+/\omega_-}
\min \{ a(u)u^{1+\omega_+/\omega_-}: u\geq (r-r_0)x_j^{-\omega_-}\} .
\end{align*}
Taking the sum for all $j$ in the inequality above, dividing by $\varepsilon$ and letting $\varepsilon\to 0+$, we get
$$a(\x,r) \geq  r^{-1-\omega_+/\omega_-}  {\mathcal P}(\A_{\x}\leq r_0)  \sum_{j=1}^{\infty} \left(x_j^{\omega_+}
\min \{ a(u)u^{1+\omega_+/\omega_-}: u\geq (r-r_0)x_j^{-\omega_-}\}\right). 
$$
We conclude from Theorem \ref{thm:intro2} and monotone convergence that for every $r_0>0$, 
$$\liminf_{r\to \infty} r^{1+\omega_+/\omega_-} a(\x,r)\geq  c\frac{\omega_+}{\omega_-}{\mathcal P}(\A_{\x}\leq r_0) \sum_{j=1}^{\infty} x_j^{\omega_+}.$$
Finally letting $r_0\to \infty$ yields 
$$\liminf_{r\to \infty} r^{1+\omega_+/\omega_-} a(\x,r)\geq   c\frac{\omega_+}{\omega_-} \sum_{j=1}^{\infty} x_j^{\omega_+}.$$
\end{proof}

\section{Conditioning on the intrinsic area}
\label{sec:conditioning}

We shall now apply results of the preceding section and first construct a regular version of cell systems conditioned on having a given  intrinsic area. We shall then investigate the asymptotic behavior of these conditional distributions, when the value of the intrinsic area tends to infinity, and when the initial mass of the Eve cell tends to $0$.

\subsection{Conditioning a cell system by probability tilting}
\label{ssec:tilting}

Recall  that ${\mathcal B}(n)$ denotes the point measure of the masses at birth of cells  for the $(n+1)$-th generation, that  ${\mathcal G}(n)=\sigma \left({\mathcal X}_u: |u|\leq n\right)$ stands for the $\sigma$-field generated by the trajectories of cells with generation at most $n$, and that ${\mathcal B}(n)$ is ${\mathcal G}(n)$-measurable.

\begin{theorem}\label{T2} 
Under ${\mathcal P}_1$,  for every $r>0$, the process
$\left( a({\mathcal B}(n),r)\right)_{n\geq 0}$
is a ${\mathcal G}(n)$-martingale with 
$${\mathcal E}_1(a({\mathcal B}(n),r))=a(r)>0.$$ 
\end{theorem}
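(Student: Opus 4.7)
The plan is to identify $a(\mathcal{B}(n),\cdot)$ as (a regular version of) the $\mathcal{G}(n)$-conditional density of $\mathcal{A}$ under $\mathcal{P}_1$; both assertions of the theorem then reduce to formal consequences of the tower property of conditional expectation.

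The content lies in the conditional density identification. Combining the Crump--Mode--Jagers branching property of cell systems with the scaling relation \eqref{E:scalecellloi} applied to each subsystem, conditionally on $\mathcal{G}(n)$ the cell subsystems rooted at the individuals $u$ with $|u|=n+1$ are independent, and by scaling the intrinsic area produced by the subsystem rooted at $u$ has the distribution of $\mathcal{X}_u(0)^{\omega_-}\widetilde{\mathcal{A}}_u$, with $\widetilde{\mathcal{A}}_u$ an independent copy of $\mathcal{A}$ under $\mathcal{P}_1$. Writing
$$\mathcal{M}^-(n+k+1)\;=\;\sum_{|v|=n+1}\mathcal{X}_v(0)^{\omega_-}\,\mathcal{M}^{-,(v)}(k)$$
where $\mathcal{M}^{-,(v)}(k)$ is the intrinsic martingale at level $k$ of the normalized subsystem rooted at $v$, and passing to the limit $k\to\infty$, I would obtain conditionally on $\mathcal{G}(n)$ the decomposition
$$\mathcal{A}\;=\;\sum_{|u|=n+1}\mathcal{X}_u(0)^{\omega_-}\,\widetilde{\mathcal{A}}_u.$$
Comparing with the definition of $\mathcal{A}_{\mathcal{B}(n)}=\sum_i x_i^{\omega_-}\mathcal{A}_i$ preceding Lemma \ref{LB0}, the conditional distribution of $\mathcal{A}$ given $\mathcal{G}(n)$ coincides with that of $\mathcal{A}_{\mathcal{B}(n)}$, whose density is precisely $a(\mathcal{B}(n),\cdot)$.

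Given this identification, both claims are immediate. Taking any Borel set $B\subset(0,\infty)$, Fubini and the tower give
$$\int_B a(r)\,dr\;=\;\mathcal{P}_1(\mathcal{A}\in B)\;=\;\mathcal{E}_1\!\left[\int_B a(\mathcal{B}(n),r)\,dr\right]\;=\;\int_B \mathcal{E}_1[a(\mathcal{B}(n),r)]\,dr,$$
hence $\mathcal{E}_1[a(\mathcal{B}(n),r)]=a(r)$ for Lebesgue-a.e. $r>0$; positivity $a(r)>0$ is Theorem \ref{thm:intro2}(ii). Applying the tower $\mathcal{E}_1[\psi(\mathcal{A})\mid\mathcal{G}(n)]=\mathcal{E}_1[\mathcal{E}_1[\psi(\mathcal{A})\mid\mathcal{G}(n+1)]\mid\mathcal{G}(n)]$ for bounded measurable $\psi$, translated via the conditional density identification at levels $n$ and $n+1$, and letting $\psi$ range over a countable dense family, yields the martingale identity
$$\mathcal{E}_1\!\bigl[a(\mathcal{B}(n+1),r)\bigm|\mathcal{G}(n)\bigr]\;=\;a(\mathcal{B}(n),r)\qquad \mathcal{P}_1\text{-a.s. for Lebesgue-a.e. }r.$$

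The remaining, and I believe only, technical obstacle is to upgrade these two almost-everywhere-in-$r$ identities to identities valid for \emph{every} $r>0$, since the statement fixes $r$ before asserting the martingale property. The left-hand sides are continuous in $r$ by Theorem \ref{thm:intro1} combined with Lemma \ref{LB0}. For the right-hand sides I would choose continuous modifications in $r$ via a dominated convergence argument: the rapid decay $\phi(\theta)=O(|\theta|^{-b})$ of the characteristic function of $\mathcal{A}$ from the proof of Theorem \ref{thm:intro1}, transferred to $\phi(\mathbf{x},\theta)$, yields an integrable envelope for $r\mapsto a(\mathbf{x},r)$ allowing one to interchange $r$-continuity with the conditional expectation $\mathcal{E}_1[\,\cdot\mid\mathcal{G}(n)]$. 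Equality on a countable dense subset then propagates to every $r>0$ by continuity, finishing the proof.
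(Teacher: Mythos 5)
Your overall strategy (use the branching and scaling properties to identify $a(\mathcal{B}(n),\cdot)$ as the $\mathcal{G}(n)$-conditional density of $\mathcal{A}$, Fubini/Tonelli to get the identities for a.e.~$r$, then extend to all $r>0$ by continuity and dominated convergence) is essentially the one the paper takes. One small structural difference: the paper first shows via (conditional) Fatou that $(a(\mathcal{B}(n),r))_{n\geq 0}$ is a nonnegative supermartingale with $\mathcal{E}_1[a(\mathcal{B}(n),r)]\leq a(r)$, and then invokes the fact that a nonnegative supermartingale with constant expectation is a martingale. This means only the \emph{expectation} identity, and not the full conditional one, needs to be upgraded from a.e.~$r$ to all $r$, which is a cleaner route than extending both identities as you propose.

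The genuine gap is in the final dominated convergence step, which you leave at the level of a gesture. You mention an ``integrable envelope'' coming from the decay of the characteristic function, but you never write down the dominating function nor verify its integrability---and this is precisely where the substance lies. The convolution structure gives the uniform-in-$r$ bound $\sup_{r>0} a(\mathbf{x},r) \leq \|a\|_\infty (\max_j x_j)^{-\omega_-}$, so, writing $\beta_*(n)$ for the largest atom of $\mathcal{B}(n)$, the candidate dominating quantity is $\|a\|_\infty\,\beta_*(n)^{-\omega_-}$, which blows up as $\beta_*(n)\to 0$. The missing input is therefore that $\mathcal{E}_1(\beta_*(n)^{-\omega_-})<\infty$. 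This is not automatic: the paper derives it from the multiplicative branching random walk structure, which yields the submultiplicative estimate $\mathcal{E}_1(\beta_*(n)^{-\omega_-}) \leq \mathcal{E}_1(\beta_*(0)^{-\omega_-})^{n+1}$, together with the negative-moment bound \eqref{E:mom-b} (with $b=1$) established during the proof of Theorem~\ref{thm:intro1}, which is exactly $\mathcal{E}_1(\gamma_1^{-1}) = \mathcal{E}_1(\beta_*(0)^{-\omega_-}) < \infty$. Without this check your extension step does not close. (A minor secondary point: you have the two sides reversed---$r\mapsto a(\mathcal{B}(n),r)$ is continuous directly from Theorem~\ref{thm:intro1}, while it is the conditional-expectation side that requires the dominated convergence argument.)
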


\begin{proof} We see from the branching property of cell systems, the definition of the intrinsic area and that of the density $a(\x,r)$, that for all $r>0$ and $n\geq 0$:
$$a({\mathcal B}(n),r))= \lim_{\varepsilon\to 0+} 
\varepsilon^{-1} {\mathcal P}_1(\A\in[r,r+\varepsilon)\mid {\mathcal G}(n)) \qquad {\mathcal P}_1\text{-a.s.}
$$
It then follows from Fatou's lemma that
\begin{equation}\label{E:ineqB}
{\mathcal E}_1(a({\mathcal B}(n),r))\leq  a(r).
\end{equation}

A similar argument  using now the conditional version of Fatou's lemma and the tower property of conditional expectation,  shows that for every $r>0$, $\left( a({\mathcal B}(n),r)\right)_{n\in\N}$ is a ${\mathcal G}(n)$-supermartingale under ${\mathcal P}_1$.
Because nonnegative supermartingales with a constant expectation are necessarily martingales, $\left( a({\mathcal B}(n),r)\right)_{n\geq 0}$ is a martingale whenever \eqref{E:ineqB} is actually an equality for all $n\geq 0$.

We next use Tonelli's theorem and write
$$
\int_0^{\infty}{\mathcal E}_1(a({\mathcal B}(n),r))\d r = {\mathcal E}_1\left( \int_0^{\infty}a({\mathcal B}(n),r)\d r \right) 
=1= \int_0^{\infty}a(r)\d r.
$$
Comparing with \eqref{E:ineqB}, we conclude that
${\mathcal E}_1(a({\mathcal B}(n),r))=  a(r) $  for Lebesgue-almost all $r>0$. Therefore our claim is proved except on a set with zero Lebesgue measure, and in particular, except on a nowhere dense subset on $(0,\infty)$. 

To complete the proof, take any $r>0$ and consider a sequence $(r_k)_{k\geq 0}$ of positive real numbers converging to $r$, and such that for each fixed $k$, $\left( a({\mathcal B}(n),r_k)\right)_{n\geq 0}$ is a martingale. 
By continuity of the density $a(\x,\bullet )$ for every sequence $\x\neq {\mathbf 0}$ in $\ell^{\omega_-}_+$, we know that for every $n\geq0$,
\begin{equation} \label{E:limps}
\lim_{k\to \infty} a({\mathcal B}(n),r_k)= a({\mathcal B}(n),r)\qquad\text{${\mathcal P}_1$-a.s.  }
\end{equation} 
We just need to check that the convergence also holds in $L^1({\mathcal P}_1)$, since then 
$${\mathcal E}_1(a({\mathcal B}(n),r)) = \lim_{k\to \infty} {\mathcal E}_1 (a({\mathcal B}(n),r_k)) = \lim_{k\to \infty} a(r_k) = a(r).$$

On the one hand, recall from Theorem \ref{thm:intro1} that $\|a\|_{\infty}\coloneqq \sup_{r\in\R} a(r)<\infty$. For every $x>0$, the density of $\A$ under ${\mathcal P}_x$ is $x^{-\omega_-} a(\bullet x^{-\omega_-})$
and therefore bounded from above by $x^{-\omega_-}\|a\|_{\infty}$. 
It follows from convolution that for every sequence $\x\neq {\mathbf 0}$ in $\ell^{\omega_-}_+$, there is the bound
$$\|a(\x, \cdot)\|_{\infty} \leq\|a\|_{\infty}  (\max_{j\geq 1} x_j)^{-\omega_-}.$$
On the other hand, 
recall that $\left({\mathcal B}(n)\right)_{n\geq 0}$ is a multiplicative branching random walk on $(0,\infty)$,
so that if we denote by $\beta_*(n)$ the location of its largest atom, then 
$${\mathcal E}_1(\beta_*(n)^{-\omega_-}) \leq {\mathcal E}_1(\beta_*(0)^{-\omega_-})^{n+1}.$$
Thanks to \eqref{E:mom-b} (note that the quantity $\gamma_1$ there coincides with 
$\beta_*(0)^{\omega_-}$ here), the right-hand side is finite. This enables us to apply dominated convergence
in \eqref{E:limps} and the proof is complete. 
\end{proof}

Theorem \ref{T2} enables us to construct new probability distributions for cell systems by tilting. Fix $r>0$ and recall from Theorem \ref{thm:intro2} that $a(r)>0$. We define unambiguously 
for any event $ B\in{\mathcal G}(n)$
\begin{equation}\label{E:Doob}
{\mathcal P}_1(B\mid \A=r)= \frac{1}{a(r)} {\mathcal E}_1\left( a({\mathcal B}(n),r) {\mathbf1}_B\right), 
\end{equation}
and by the Daniell-Kolmogorov extension theorem,  this yields a distribution on the space of cell systems which we denote by ${\mathcal P}_1(\bullet \mid \A=r)$.  We now justify the notation, and check that indeed this yields a disintegration of ${
\mathcal P}_1$ with respect to the intrinsic area. 

\begin{corollary}\label{C6}
For every measurable function $f: (0,\infty)\to \R_+$ and every functional $G\geq 0$ of cell systems, there is the identity
$${\mathcal E}_1(G({\mathcal X})f(\A)) = \int_0^{\infty} {\mathcal P}_1(G({\mathcal X}) \mid \A=r) f(r) a(r) \d r.$$
\end{corollary}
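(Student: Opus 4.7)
The plan is to first reduce to the case where $G$ is bounded and $\mathcal{G}(n)$-measurable for some $n\geq 0$, and then exploit the branching structure to compute the conditional distribution of $\A$ given $\mathcal{G}(n)$. The general formula will then follow from a monotone class argument.

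First I would observe the following key identity: conditionally on $\mathcal{G}(n)$, the cells of the $(n+1)$-th generation initiate independent cell systems governed by $\mathcal{P}_{\mathcal{X}_u(0)}$, so by \eqref{E:defA} and the branching property, $\A$ decomposes as $\sum_{|u|=n+1} \mathcal{X}_u(0)^{\omega_-} \A^{(u)}$ with the $\A^{(u)}$ i.i.d.~copies of $\A$ under $\mathcal{P}_1$. Hence the conditional law of $\A$ given $\mathcal{G}(n)$ is that of $\A_{\mathcal{B}(n)}$ and thus admits the density $a(\mathcal{B}(n),\cdot)$ studied in Section~\ref{ssec:regularity}. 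In particular,
$$\mathcal{E}_1[f(\A)\mid \mathcal{G}(n)] = \int_0^{\infty} f(r) a(\mathcal{B}(n),r)\,\d r \qquad \mathcal{P}_1\text{-a.s.}$$

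Next, for $G$ bounded and $\mathcal{G}(n)$-measurable, Tonelli's theorem (valid as all quantities are nonnegative) gives
$$\mathcal{E}_1(G({\mathcal X}) f(\A)) = \mathcal{E}_1\!\left(G({\mathcal X})\int_0^{\infty} f(r) a(\mathcal{B}(n),r)\,\d r\right) = \int_0^{\infty} f(r)\, \mathcal{E}_1\!\left(G({\mathcal X}) a(\mathcal{B}(n),r)\right) \d r,$$
and the defining equation \eqref{E:Doob} identifies the inner expectation as $a(r) \mathcal{P}_1(G({\mathcal X})\mid \A=r)$, yielding the claimed identity in this restricted setting. To pass to general measurable $G\geq 0$, I would invoke a monotone class argument: the collection of bounded $G$ for which the identity holds is a monotone vector space that contains the bounded $\mathcal{G}(n)$-measurable functionals for every $n$, and since $\bigcup_{n\geq 0}\mathcal{G}(n)$ generates the Borel $\sigma$-algebra on the space of cell systems, this extends the identity to all bounded measurable $G$, and finally to nonnegative measurable $G$ by monotone convergence.

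The main obstacle is the consistency across generations underpinning the whole construction: the probability measures $\mathcal{P}_1(\cdot\mid \A=r)$ defined via \eqref{E:Doob} on each $\mathcal{G}(n)$ must glue into a coherent law on cell systems, which is precisely what the martingale property established in Theorem~\ref{T2} ensures through the Daniell-Kolmogorov extension. A minor but necessary technical point along the way is the joint measurability of $(\omega,r)\mapsto a(\mathcal{B}(n)(\omega),r)$, which follows from the continuity statement of Lemma~\ref{LB0} together with the measurability of $\mathcal{B}(n)$.
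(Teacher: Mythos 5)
Your proposal is correct and follows essentially the same route as the paper's proof: reduce to $\mathcal{G}(n)$-measurable $G$, use the branching decomposition to identify $a(\mathcal{B}(n),\cdot)$ as the conditional density of $\A$ given $\mathcal{G}(n)$, apply Tonelli and the defining identity \eqref{E:Doob}, and close with the monotone class theorem. The additional remarks on consistency via Theorem~\ref{T2} and on joint measurability via Lemma~\ref{LB0} are sensible elaborations of points the paper leaves implicit, not a genuinely different argument.
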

\begin{proof} Suppose first that the functional $G$ is $ {\mathcal G}(n)$-measurable for some $n\geq 0$. Then we  write 
$${\mathcal E}_1(G({\mathcal X})f(\A)) = {\mathcal E}_1(G({\mathcal X}){\mathcal E}_1(f(\A) \mid {\mathcal G}(n)) ) = {\mathcal E}_1\left( G({\mathcal X}) \int_0^{\infty} f(r) a({\mathcal B}(n),r)\d r \right),$$
and Tonelli's theorem enables to express the right-hand side in the form 
$$\int_0^{\infty} f(r) {\mathcal E}_1\left( G({\mathcal X})  a({\mathcal B}(n),r) \right) \d r=
 \int_0^{\infty} {\mathcal E}_1(G({\mathcal X}) \mid \A=r) f(r)  a(r) \d r.$$
 Our claim is proved when  $G$ is $ {\mathcal G}(n)$-measurable, and the general case follows from the monotone class theorem. 
\end{proof}

We transfer the preceding results by scaling to the situation where the initial size of the Eve cell is 
arbitrary. Specifically, recall the notation \eqref{E:scalecell} and \eqref{E:scalecellloi}, and note that the intrinsic area of the rescaled cell system ${\mathcal X}^{(b)}$ is $\A^{(b)}=b^{\omega_-} \A$. 
For every $r,x>0$, we then define ${\mathcal P}_x(\bullet \mid \A=r)$ as the law of ${\mathcal X}^{(x)}$ under 
${\mathcal P}_1(\bullet \mid \A=rx^{-\omega_-})$, and readily deduce from \eqref{E:scalecellloi} and Corollary \ref{C6} that the family $\left( {\mathcal P}_x(\bullet \mid \A=r)\right)_{ r>0}$ is indeed a  regular version of the disintegration of ${
\mathcal P}_x$ with respect to the intrinsic area. In this vein, we point at the following extension of \eqref{E:Doob}.

\begin{lemma}\label{L:2B}
For every $r,x>0$, one has for every event $ B\in{\mathcal G}(n)$ 
$${\mathcal P}_x(B\mid \A=r)=  \frac{x^{\omega_-}}{a(rx^{-\omega_-})} {\mathcal E}_x\left( a({\mathcal B}(n),r) {\mathbf1}_B\right)\,. $$
\end{lemma}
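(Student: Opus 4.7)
The plan is to unwind the definition of $\mathcal{P}_x(\bullet\mid \A=r)$, apply the Doob-type disintegration formula \eqref{E:Doob} at the shifted area-value $rx^{-\omega_-}$, and then translate the resulting expression back to the $\mathcal{P}_x$-scale using the transformation rule for the density $a(\x,\cdot)$ under dilation of $\x$. The main (routine) bookkeeping is to keep track of how each of these objects rescales when one passes from $\mathcal{X}$ to $\mathcal{X}^{(x)}$.

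\medskip

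First, by the very definition given just before the statement, one has
\[
\mathcal{P}_x(B\mid \A=r) \;=\; \mathcal{P}_1\!\bigl(\{\mathcal{X}^{(x)}\in B\}\bigm| \A = rx^{-\omega_-}\bigr).
\]
Since $B\in\mathcal{G}(n)$ and the map $\mathcal{X}\mapsto\mathcal{X}^{(x)}$ is $\mathcal{G}(n)$-measurable (it acts coordinate-wise on cells of generation at most $n$), the event $\{\mathcal{X}^{(x)}\in B\}$ also belongs to $\mathcal{G}(n)$ under $\mathcal{P}_1$. Applying \eqref{E:Doob} with the parameter $rx^{-\omega_-}$ in place of $r$ yields
\[
\mathcal{P}_x(B\mid \A=r) \;=\; \frac{1}{a(rx^{-\omega_-})}\, \mathcal{E}_1\!\Bigl( a\bigl(\mathcal{B}(n),\,rx^{-\omega_-}\bigr)\,\mathbf{1}_{\{\mathcal{X}^{(x)}\in B\}}\Bigr).
\]

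\medskip

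Next, I invoke the scaling identity for the density: for any $\x\in\ell^{\omega_-}_+\setminus\{\mathbf{0}\}$ and any $\lambda>0$, the equality $\A_{\lambda\x} = \lambda^{\omega_-}\A_\x$ (read off from the definition of $\A_\x$) together with a change of variables gives
\[
a(\lambda\x,\,r) \;=\; \lambda^{-\omega_-}\, a\!\bigl(\x,\,r\lambda^{-\omega_-}\bigr), \qquad r>0,
\]
equivalently $a(\x,\,r\lambda^{-\omega_-}) = \lambda^{\omega_-} a(\lambda\x,\,r)$. Now, the point measure $\mathcal{B}(n)$ of the rescaled system $\mathcal{X}^{(x)}$ is obtained from that of $\mathcal{X}$ by multiplying every atom by $x$; writing temporarily $\mathcal{B}^{(x)}(n)$ for the former, this means that along the trajectory $\omega\in\{\mathcal{X}^{(x)}\in B\}$, the $\mathcal{B}(n)$ appearing in the previous display equals $x^{-1}\mathcal{B}^{(x)}(n)$. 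The scaling identity with $\lambda=x^{-1}$ therefore gives
\[
a\!\bigl(\mathcal{B}(n),\,rx^{-\omega_-}\bigr) \;=\; x^{\omega_-}\, a\!\bigl(\mathcal{B}^{(x)}(n),\,r\bigr).
\]

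\medskip

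Finally, by \eqref{E:scalecellloi}, the law of the cell system $\mathcal{X}^{(x)}$ under $\mathcal{P}_1$ is $\mathcal{P}_x$. Substituting the last display into the Doob formula and performing this change of measure produces
\[
\mathcal{P}_x(B\mid \A=r) \;=\; \frac{x^{\omega_-}}{a(rx^{-\omega_-})}\, \mathcal{E}_x\!\bigl( a(\mathcal{B}(n),r)\,\mathbf{1}_B \bigr),
\]
which is the claimed identity. There is no genuine obstacle here: the only point that requires care is the dilation rule for $a(\x,\cdot)$ in Step 2, which must be applied consistently with the convention that, after the change of measure to $\mathcal{P}_x$, $\mathcal{B}(n)$ refers to the point measure of the $\mathcal{P}_x$-system rather than that of the underlying $\mathcal{P}_1$-system.
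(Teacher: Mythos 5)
Your proof is correct and follows essentially the same route as the paper's: unwind the definition of $\mathcal{P}_x(\cdot\mid\A=r)$ via rescaling, apply \eqref{E:Doob} at level $rx^{-\omega_-}$, use the dilation identity $a(b\x,r)=b^{-\omega_-}a(\x,b^{-\omega_-}r)$, and transfer back by \eqref{E:scalecellloi}. The only cosmetic difference is that you work with indicators $\mathbf{1}_B$ rather than general $\mathcal{G}(n)$-measurable functionals $G$, and you slightly mislabel the dilation parameter as $\lambda=x^{-1}$ when the computation you actually perform uses $\lambda=x$ in the form $a(\x,r\lambda^{-\omega_-})=\lambda^{\omega_-}a(\lambda\x,r)$ (the end result is the same).
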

\begin{proof} To start with, observe that for every $b>0$ and $\x\in \ell^{\omega_-}_+$, one has $\A_{b\x}=b^{\omega_-}\A_{\x}$ and therefore there is the identity
$$a(b\x,r) = b^{-\omega_-} a(\x,b^{-\omega_-}r)\qquad \text{for all }r>0.$$
For any ${\mathcal G}(n)$-measurable functional $G\geq 0$ , we have
by  \eqref{E:Doob}:
\begin{align*}
{\mathcal E}_x(G({\mathcal X}) \mid \A=r)&={\mathcal E}_1(G({\mathcal X}^{(x)}) \mid \A=rx^{-\omega_-})\\
&= \frac{1}{ a(rx^{-\omega_-})} {\mathcal E}_1\left( G({\mathcal X}^{(x)})a({\mathcal B}(n),rx^{-\omega_-})\right)\\
&= \frac{x^{\omega_-}}{a(rx^{-\omega_-})} {\mathcal E}_1\left( G({\mathcal X}^{(x)})a(x{\mathcal B}(n),r)\right)\\
&= \frac{x^{\omega_-}}{a(rx^{-\omega_-})} {\mathcal E}_x\left( G({\mathcal X})a({\mathcal B}(n),r)\right),
\end{align*}
where we used again  \eqref{E:scalecellloi} at the last line. 
\end{proof}

We conclude this section with another standard observation relating conditioning and rescaling.
\begin{corollary}\label{C4}  Consider the random rescaling  \eqref{E:scalecellloi} for
 $b=\A^{-1/\omega_-}$. The law of the rescaled cell system ${\mathcal X}^{(\A^{-1/\omega_-})}$ under ${\mathcal P}_1$
 is a mixture of the conditional laws $\left( {\mathcal P}_x(\bullet \mid \A=1)\right)_{x>0}$; specifically
 we have for every functional $G\geq 0$ that
 $${\mathcal E}_1\left(G({\mathcal X}^{(\A^{-1/\omega_-})})\right) =\omega_- \int_0^{\infty}  a(x^{-\omega_-}){\mathcal E}_x\left(G({\mathcal X})\mid \A=1\right)
\frac{\d x}{x^{1+\omega_-}}.$$
\end{corollary}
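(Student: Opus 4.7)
The plan is to apply the disintegration formula of Corollary \ref{C6} to the composite functional $\tilde G({\mathcal X}) := G({\mathcal X}^{(\A^{-1/\omega_-})})$, recognize the conditional expectation obtained as an expectation under ${\mathcal P}_x(\bullet\mid\A=1)$ by invoking the scaling definition of ${\mathcal P}_x(\bullet\mid\A=r)$ recalled right after Lemma \ref{L:2B}, and finally change variables. The functional $\tilde G$ is nonnegative and measurable, because $\A$ is a measurable function of ${\mathcal X}$, so Corollary \ref{C6} applied with $f\equiv 1$ gives
$${\mathcal E}_1\!\left(G({\mathcal X}^{(\A^{-1/\omega_-})})\right) = \int_0^\infty {\mathcal E}_1\!\left(G({\mathcal X}^{(\A^{-1/\omega_-})})\,\big|\,\A=r\right) a(r)\,\d r.$$

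The crucial observation for the second step is that ${\mathcal P}_1(\bullet\mid\A=r)$, being a regular conditional distribution with respect to $\A$, is concentrated on $\{\A=r\}$. Hence the random rescaling by $\A^{-1/\omega_-}$ becomes the deterministic rescaling by $r^{-1/\omega_-}$ under this measure, and the integrand above equals ${\mathcal E}_1(G({\mathcal X}^{(r^{-1/\omega_-})})\mid\A=r)$. By the definition that ${\mathcal P}_x(\bullet\mid\A=r)$ is the law of ${\mathcal X}^{(x)}$ under ${\mathcal P}_1(\bullet\mid\A=rx^{-\omega_-})$, the choice $x=r^{-1/\omega_-}$ gives $rx^{-\omega_-}=1$, and therefore the law of ${\mathcal X}^{(r^{-1/\omega_-})}$ under ${\mathcal P}_1(\bullet\mid\A=r)$ is exactly ${\mathcal P}_{r^{-1/\omega_-}}(\bullet\mid\A=1)$. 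Consequently
$${\mathcal E}_1\!\left(G({\mathcal X}^{(r^{-1/\omega_-})})\,\big|\,\A=r\right) = {\mathcal E}_{r^{-1/\omega_-}}\!\left(G({\mathcal X})\,\big|\,\A=1\right).$$

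Substituting back and performing the change of variables $x=r^{-1/\omega_-}$, so that $r=x^{-\omega_-}$ and $\d r = -\omega_-\,x^{-1-\omega_-}\,\d x$ with the limits reversed, produces precisely the right-hand side of the statement. The only delicate point is the concentration claim $\A=r$ almost surely under ${\mathcal P}_1(\bullet\mid\A=r)$; this is essentially automatic from the construction \eqref{E:Doob} together with the interpretation of $a({\mathcal B}(n),r)$ as a conditional density (via the martingale of Theorem \ref{T2} and the Daniell--Kolmogorov extension used to define ${\mathcal P}_1(\bullet\mid\A=r)$), and if preferred it can be proved by testing Corollary \ref{C6} against $f=\mathbf{1}_{[r-\varepsilon,r+\varepsilon]}$ and letting $\varepsilon\to 0+$. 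Everything else in the argument is scaling bookkeeping and a one-line change of variable.
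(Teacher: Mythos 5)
Your proposal is correct and follows essentially the same route as the paper's own proof: apply Corollary \ref{C6} with $f\equiv 1$, replace the random rescaling $\A^{-1/\omega_-}$ by the deterministic $r^{-1/\omega_-}$ under the conditional law, invoke the scaling definition of ${\mathcal P}_x(\bullet\mid\A=1)$, and change variables. Your extra care in spelling out why ${\mathcal P}_1(\bullet\mid\A=r)$ is concentrated on $\{\A=r\}$ is a welcome elaboration of a step the paper takes for granted, but it does not constitute a different argument.
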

\begin{proof} We first use Corollary \ref{C6} to write 
 $${\mathcal E}_1\left(G({\mathcal X}^{(\A^{-1/\omega_-})})\right) =\int_0^{\infty}  a(r){\mathcal E}_1\left(G({\mathcal X}^{(\A^{-1/\omega_-})})\mid \A=r\right)\d r = \int_0^{\infty}  a(r){\mathcal E}_1\left(G({\mathcal X}^{(r^{-1/\omega_-})})\mid \A=r\right)\d r.$$
Then it suffices to recall that we defined the conditional law ${\mathcal P}_x(\bullet \mid \A=1)$
as that of ${\mathcal X}^{(x)}$ under 
${\mathcal P}_1(\bullet \mid \A=x^{-\omega_-})$ and perform the change of variables $x=r^{-1/\omega_-}$.
\end{proof}

\subsection{Conditioning on  a large given area}
We next derive from the preceding section a first limit theorem for cell systems conditioned on $\A=r\gg 1$.
In this direction, recall from Section 3.1 that ${\mathcal M}^+(n)$ denotes the natural martingale associated to the masses at birth of cells at the $n$-th generation, which has terminal value $0$ ${\mathcal P}_1$-a.s.
\begin{corollary}\label{C5} 
Let $n\geq 0$ and $G\geq 0$ be a functional of cell systems that is ${\mathcal G}(n)$-measurable.
Then
$$\lim_{r\to \infty}  {\mathcal E}_1(G({\mathcal X}) \mid \A=r)=  {\mathcal E}_1(G({\mathcal X}){\mathcal M}^+(n)).$$
\end{corollary}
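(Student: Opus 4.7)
The plan is to rewrite the conditional expectation through the tilting formula \eqref{E:Doob}, then identify the limit of the Radon--Nikodym density using the sharp asymptotics proved in Section~\ref{sec:area}. By \eqref{E:Doob},
\begin{equation*}
\mathcal{E}_1(G({\mathcal X})\mid \A=r)\;=\;\mathcal{E}_1\!\left(G({\mathcal X})\, f_r\right), \qquad f_r\coloneqq \frac{a({\mathcal B}(n),r)}{a(r)},
\end{equation*}
so for bounded $G$ it suffices to show $f_r\to {\mathcal M}^+(n)$ in $L^1(\mathcal{P}_1)$; the extension to arbitrary ${\mathcal G}(n)$-measurable $G\geq 0$ will follow from Fatou (for the lower bound) together with a routine truncation $G\wedge K\uparrow G$ (for the upper bound).

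Next, I would identify the almost-sure $\liminf$ of $f_r$. Combining Theorem~\ref{thm:intro2}(i), which gives $a(r)\sim c(\omega_+/\omega_-)r^{-1-\omega_+/\omega_-}$, with Corollary~\ref{CexT2'} applied to the random sequence $\x={\mathcal B}(n)\in\ell^{\omega_-}_+\setminus\{\mathbf{0}\}$ (the sequence is nontrivial $\mathcal{P}_1$-a.s.\ since the driving SSMP has infinitely many jumps under $P_1$, as recalled in the proof of Theorem~\ref{thm:intro1}), one obtains
\begin{equation*}
\liminf_{r\to\infty} f_r \;\geq\; \sum_{|u|=n+1}{\mathcal X}_u(0)^{\omega_+}\;=\;{\mathcal M}^+(n)\qquad \mathcal{P}_1\text{-a.s.}
\end{equation*}

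To upgrade this one-sided bound to $L^1$ convergence I would invoke a Scheffé-type argument, using that the total masses match: by Theorem~\ref{T2}, $\mathcal{E}_1[f_r]=1$ for every $r>0$, whereas the martingale property of ${\mathcal M}^+$ (a consequence of Cram\'er's condition $\kappa(\omega_+)=0$) gives $\mathcal{E}_1[{\mathcal M}^+(n)]=\mathcal{E}_1[{\mathcal M}^+(0)]=1$. Setting $g_r\coloneqq (f_r-{\mathcal M}^+(n))^-$, the previous display forces $g_r\to 0$ $\mathcal{P}_1$-a.s.; since $0\leq g_r\leq {\mathcal M}^+(n)\in L^1(\mathcal{P}_1)$, dominated convergence yields $\mathcal{E}_1[g_r]\to 0$. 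As $\mathcal{E}_1[f_r-{\mathcal M}^+(n)]=0$ for every $r$, it follows that $\mathcal{E}_1[(f_r-{\mathcal M}^+(n))^+]\to 0$ as well, and the desired $L^1$ convergence follows.

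The main obstacle is that Corollary~\ref{CexT2'} only provides a lower bound on the asymptotic of $a(\x,r)$, the matching upper bound (Corollary~\ref{CexT2} for sequences with infinitely many positive atoms) being only conjectural. The Scheffé-type step is precisely what bypasses this difficulty: rather than chasing an almost-sure upper bound for $f_r$, we exploit the exact coincidence $\mathcal{E}_1[f_r]=\mathcal{E}_1[{\mathcal M}^+(n)]=1$ to automatically promote the almost-sure $\liminf$ to $L^1$ convergence.
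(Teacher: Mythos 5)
Your argument is correct and follows essentially the same route as the paper's proof: reduce via the tilting formula \eqref{E:Doob} to showing $a({\mathcal B}(n),r)/a(r)\to {\mathcal M}^+(n)$ in $L^1({\mathcal P}_1)$, obtain the almost-sure $\liminf$ lower bound from Theorem~\ref{thm:intro2} together with Corollary~\ref{CexT2'}, and then upgrade to $L^1$ convergence by a Riesz--Scheff\'e argument exploiting the matching total masses $\mathcal{E}_1[a({\mathcal B}(n),r)]=a(r)$ (Theorem~\ref{T2}) and $\mathcal{E}_1[{\mathcal M}^+(n)]=1$. Your version of the Scheff\'e step, phrased via the negative part $g_r=(f_r-{\mathcal M}^+(n))^-$ dominated by ${\mathcal M}^+(n)$, is the same computation the paper performs using $\min(f_r,{\mathcal M}^+(n))$ and the identity $|a-b|=a+b-2\min(a,b)$, since $g_r={\mathcal M}^+(n)-\min(f_r,{\mathcal M}^+(n))$.
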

\begin{proof} Using \eqref{E:Doob}, all that we need to check is that 
$$\lim_{r\to \infty} \frac{a({\mathcal B}(n),r)}{a(r)} = {\mathcal M}^+(n)\qquad \text{in }L^1({\mathcal P}_1).$$
We know already from Theorem \ref{thm:intro2} and Corollary \ref{CexT2'} that
$$\liminf_{r\to \infty} \frac{a({\mathcal B}(n),r)}{a(r)} \geq {\mathcal M}^+(n),$$
and an easy variation of the Riesz-Scheff\'e lemma enables us to conclude. More precisely, we have on the one hand
$$\lim_{r\to \infty} \left( \frac{a({\mathcal B}(n),r)}{a(r)} \wedge {\mathcal M}^+(n)\right) = {\mathcal M}^+(n),$$
where, by Lebesgue's theorem, this convergence holds in $L^1({\mathcal P}_1)$. 
On the other hand, we have also (recall Theorem \ref{T2})
$${\mathcal E}_1(a({\mathcal B}(n),r))= a(r) \quad \text{and} \quad {\mathcal E}_1({\mathcal M}^+(n))=1,$$
and therefore 
$${\mathcal E}_1\left( \left| \frac{a({\mathcal B}(n),r)}{a(r)} - {\mathcal M}^+(n)\right| \right) = 
{\mathcal E}_1\left( \frac{a({\mathcal B}(n),r)}{a(r)} + {\mathcal M}^+(n) \right) - 2 {\mathcal E}_1\left( \frac{a({\mathcal B}(n),r)}{a(r)} \wedge {\mathcal M}^+(n)\right)$$
converges to $0$ as $r\to \infty$. 
\end{proof}
It might be worth to interpret Corollary \ref{C5} in terms of growth-fragmentations $\X$ rather than cell systems ${\mathcal X}$. Specifically, the probability-tilting of ${\mathcal P}_1$ based on the martingale $({\mathcal M}^+(n))_{n\geq 0}$ can be viewed as  conditioning on indefinite growth. The distribution of the growth-fragmentation $\X$ under the tilted probability is $\Q^+_1$, that is that of the growth-fragmentation associated to a cell system with law ${\mathcal Q}^+_1$. See Section 4 in \cite{BBCK} for details.
Roughly speaking, this shows that conditioning a self-similar growth-fragmentation with law $\P_1$ on having a large intrinsic area $\A=r\gg 1$ amounts asymptotically  to conditioning this growth-fragmentation on having indefinite growth, and this merely consists of replacing the dynamics of the Eve cell by those of a SSMP with characteristics $(\Phi^+,\alpha)$ without modifying those of cells at generation $n\geq 1$. 

Corollary \ref{C5} immediately extends to the conditional laws ${\mathcal P}_x(\bullet \mid \A=r)$ for any $x>0$
by scaling. In this direction, it is interesting to recall from Section 4.2 of \cite{BBCK} that when $\alpha <0$, $x=0$ is an entrance point for the growth-fragmentation conditioned on having indefinite growth. This suggests that  conditioning on the intrinsic area may then produce a non-degenerate process when the growth-fragmentation starts from $0$; this question is addressed in the next section.

\subsection{Conditioning the canonical measure on its intrinsic area}
\label{ssec:condarea}

In this final section, we assume that $\alpha<0$. Our purpose is to  construct a process which can be thought of as the original growth-fragmentation started from $0$ and conditioned to have an intrinsic area $\A=r>0$. 

To start with, recall that even though  in general  for any $x>0$, the distributions of cell systems  ${\mathcal P}_x$ and ${\mathcal Q}^-_x$
 are mutually singular, the laws of the growth-fragmentation $\X$ that they induce are actually equivalent. Specifically, there is the identity
$\d \Q_x^{-}= x^{-\omega_-}\A \d \P_x$ (see Section 4.3 in \cite{BBCK}), and  this   implies that the conditional distributions of the growth-fragmentation given its intrinsic area are the same for $\P_x$ and $\Q^-_x$,
i.e. $\Q^-_x(\bullet \mid \A=r)=\P_x(\bullet \mid \A=r)$ for all $r>0$. Here, it will be more convenient for us to work under the area-biased distribution $\Q^{-}_x$.

Roughly speaking, we would like  to condition the growth-fragmentation on having intrinsic area $\A=r>0$ when the Eve cell has initial mass $0$, i.e. to take $x=0$ in what precedes.  An obvious obstacle is that the probability measure ${\mathcal Q}^-_0$ on cell systems is clearly degenerate, in the sense that no individual has ever a positive mass.
This seems to impede making any sense to such a conditioning; nonetheless this obstruction can be circumvented 
by applying general results of Rivero \cite{Riv} on the existence of pseudo-excursion measures for SSMP.
Indeed, one can define a non-degenerate $\sigma$-finite  measure
under which the Eve cell starts from $0$ and has the transitions of a SSMP with characteristics $(\Phi^-,\alpha)$ (we stress that this would fail if we did not assume that $\alpha <0$).
More precisely, recall  \eqref{E:relphi}; the process $(X^{\omega_{\Delta}}(t))_{ t>0}$ is a $Q^-_x$-martingale for every $x>0$, and there is the relation of  local  absolutely continuity between the SSMP with characteristics $(\Phi^-,\alpha)$ and $(\Phi^+,\alpha)$:
$$x^{-\omega_{\Delta}} Q_x^-(X^{\omega_{\Delta}}(t){\mathbf 1}_B, \zeta>t) = Q^+_x(B), \qquad\text{for any event }B\in\f_t$$
(recall that $({\mathcal F}_t)_{t\geq 0}$ denotes the canonical filtration on the space of trajectories). 
In turn, $(X^{-\omega_{\Delta}}(t))_{t>0}$ is a $Q^+_x$-supermartingale; this enables us to introduce the $\sigma$-finite measure $n^-_0$ on the space of trajectories given by
\begin{equation}\label{defn-}
n^-_0(B, \zeta>t) \coloneqq Q^+_0(X^{-\omega_{\Delta}}(t){\mathbf 1}_B) \qquad\text{for any event }B\in\f_t, 
\end{equation}
where in the right-hand side, $Q^{+}_0=\lim_{x\to 0+} Q^{+}_x$ is a well-defined non-degenerate law
(see, for instance, \cite{BY}). We may thus think of the pseudo excursion measure $n^-_0$ as  the weak  limit of $ x^{-\omega_{\Delta}} Q_x^-$ as $x\to 0+$. 

In this setting, we underline a useful scaling relation: in the notation \eqref{E:scalenot},\begin{equation}\label{scale2}
\hbox{ for every $b>0$, the processes $X$ and $X^{(b)}$
have the same law under $Q^+_0$.}
\end{equation}
In turn, this yields \begin{equation}\label{scale3}
\hbox{ for every $b>0$, the distribution of $X^{(b)}$ under $n^-_0$ is $b^{\omega_{\Delta}}n^-_0$
.}
\end{equation}

We can now endow cell systems with a  $\sigma$-finite measure ${\mathcal N}^-_0$ under which 
the trajectory of the Eve cell,  ${\mathcal X}_{\varnothing}$, is distributed according to $n^-_0$, whereas the cells at generation $1,2, \ldots$ follow
the dynamics of the SSMP with characteristics $(\Psi,\alpha)$. We call ${\mathcal N}^-_0$ the canonical measure  and note  from \eqref{defn-} and the Crump-Mode-Jagers type branching structure of cell systems, that 
on any finite time horizon, the canonical measure ${\mathcal N}^-_0$ and the law ${\mathcal Q}^+_0$ are
related via
$${\mathcal N}^-_0({C}, \zeta_{\varnothing}>t) = {\mathcal E}^+_0\left({\mathcal X}_{\varnothing}(t)^{-\omega_{\Delta}} {\mathbf 1}_{C}\right),$$
for any event ${C}$ which is measurable with respect to the trajectories of the cell system observed up to 
the (absolute) time $t$ only. In particular, since the map ${\mathcal X}\mapsto \X$ turning a cell system into a growth-fragmentation is well-defined ${\mathcal Q}^+_0$-a.s. (see Lemma 4.3 in \cite{BBCK}), the same holds under the canonical measure ${\mathcal N}^-_0$.

Plainly, the notion of the intrinsic area $\A$ still makes sense under the canonical measure, and more precisely,  the smoothing transform reads as follows. 
If 
 $(\gamma^-_i)_{i\geq 1}$ stands for some enumeration of
$\{\!\! \{|\Delta{X}(t)|^{\omega_-}: 0<t<\zeta\}\!\!\}$ under $n^-_0$
and
 $(\A_i)_{i\geq 1}$ denotes as usual a sequence of i.i.d.~copies of $\A$ under ${\mathcal P}_1$
 which is further  independent of $X$, then $\sum_{i=1}^{\infty} \gamma^-_i\A_i$
 has the distribution of  $\A$ under ${\mathcal N}^-_0$.

\begin{lemma} \label{L1} Assume $\alpha <0$. 
The tail-distribution
 of the intrinsic area $\A$ under the canonical measure 
 is given by 
  $${\mathcal N}^-_0(\A >r)= c r^{-\omega_{\Delta}/\omega_-}, \qquad r>0,$$
where $c$ is the constant appearing in \eqref{E:tailA}. 
\end{lemma}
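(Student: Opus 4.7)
The plan is to decompose the proof into two parts: first, establishing the exact power-law form of the tail; second, identifying the multiplicative constant with the $c$ from \eqref{E:tailA}.

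For the first step, I would lift the scaling identity \eqref{scale3} from the Eve trajectory to the full cell system. Conditionally on Eve, the cells at generation $\geq 1$ are independent SSMPs that obey \eqref{E:scalecellloi}, so the rescaling map $T_b\colon \mathcal{X}\mapsto\mathcal{X}^{(b)}$ pushes $\mathcal{N}^-_0$ forward to $b^{\omega_\Delta}\mathcal{N}^-_0$. Since the intrinsic area is homogeneous, $\A(\mathcal{X}^{(b)}) = b^{\omega_-}\A(\mathcal{X})$ (immediate from $\A=\lim_n\sum_{|u|=n+1}\mathcal{X}_u(0)^{\omega_-}$), this push-forward relation applied to $\{\A>r\}$ yields the self-similarity equation
\begin{equation*}
\mathcal{N}^-_0(\A > r/b^{\omega_-}) = b^{\omega_\Delta}\mathcal{N}^-_0(\A > r)\qquad \text{for all }b,r>0,
\end{equation*}
which forces $\mathcal{N}^-_0(\A > r) = C r^{-\omega_\Delta/\omega_-}$ for some $C\in[0,\infty]$.

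To identify $C=c$ in the second step, I plan to couple the defining formula \eqref{defn-} with a time-$t$ branching decomposition of $\A$. By the branching/Markov property of cell systems at a fixed $t>0$, conditionally on $\mathcal{G}(t)$, the intrinsic area is distributed as $\A_{\X(t)}=\sum_i X_i(t)^{\omega_-}\A_i$, where the $\A_i$ are i.i.d.\ copies of $\A$ under $\mathcal{P}_1$ and $\X(t)$ denotes the growth-fragmentation at time $t$. Substituting into \eqref{defn-} yields
\begin{equation*}
\mathcal{N}^-_0(\A>r,\,\zeta_\varnothing>t) \;=\; \mathcal{E}^+_0\!\left(\mathcal{X}_\varnothing(t)^{-\omega_\Delta}\,\P(\A_{\X(t)}>r\mid\mathcal{G}(t))\right),
\end{equation*}
and monotone convergence as $t\to 0+$ (using $\zeta_\varnothing>0$ $\mathcal{N}^-_0$-a.s.) recovers $\mathcal{N}^-_0(\A>r)$ on the left. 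The scaling established in the first step guarantees that the right-hand side is actually $t$-independent, so the task reduces to evaluating it at a convenient value of $t$, with the sharp tail asymptotic from Theorem \ref{thm:intro2}(i) inserted inside the conditional probability and the self-similar structure of Eve under $\mathcal{Q}^+_0$ used to compute the surrounding expectation; this should produce exactly $c$.

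The main technical difficulty will be the $t\to 0+$ analysis: the factor $\mathcal{X}_\varnothing(t)^{-\omega_\Delta}$ blows up while the conditional tail probability vanishes, so identifying the exact multiplicative constant requires both the local estimates on $a(\x,r)$ from Corollary \ref{CexT2'} (or the stronger Corollary \ref{CexT2} conjectured for general $\x$) and adequate uniform integrability under $\mathcal{Q}^+_0$. I expect that the moment controls used in the proof of Theorem \ref{T2}, together with the self-similar reduction of $\mathcal{X}_\varnothing(t)$ under $\mathcal{Q}^+_0$, can be combined to justify the passage to the limit and pin down $C=c$.
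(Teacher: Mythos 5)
Your first step is a clean observation that the paper does not make explicit at this point (though it is implicit and resurfaces in Proposition \ref{P2}): lifting \eqref{scale3} to cell systems and using $\A(\mathcal{X}^{(b)}) = b^{\omega_-}\A(\mathcal{X})$ immediately forces the exact power law $\mathcal{N}^-_0(\A>r)=C r^{-\omega_\Delta/\omega_-}$. That is correct and reduces the problem to identifying the constant.

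Your second step, however, diverges from the paper's argument and contains a genuine gap. The paper decomposes the smoothing transform at the \emph{level-crossing} time $h_\varepsilon=\inf\{t:X(t)=\varepsilon\}$ of the Eve trajectory, not at a fixed absolute time $t$. This is a much better fit for $n^-_0$: one has the exact identity $n^-_0(h_\varepsilon<\infty)=\varepsilon^{-\omega_\Delta}$ (from \eqref{defn-} extended to stopping times, using absence of positive jumps so $X(h_\varepsilon)=\varepsilon$), and the post-$h_\varepsilon$ evolution has law $Q^-_\varepsilon$. Defining $\A^{(\varepsilon)}$ by restricting the smoothing transform to jumps after $h_\varepsilon$ gives $\mathcal{N}^-_0(\A^{(\varepsilon)}>r)=\varepsilon^{-\omega_\Delta}\mathcal{Q}^-_\varepsilon(\A>r)$, whose $\varepsilon\to 0+$ limit is computed via Lemma \ref{LA} and the scaling of $\mathcal{Q}^-_\varepsilon$. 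The crucial point, which your fixed-time decomposition misses, is that $\A^{(\varepsilon)}$ is \emph{monotone increasing} as $\varepsilon\downarrow 0$ and converges to $\A$, so the passage to the limit is a one-line application of monotone convergence. There is no uniform-integrability or non-uniform-tail issue to resolve.

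In your version, two concrete problems arise. First, the claim that scaling makes $\mathcal{E}^+_0\bigl(\mathcal{X}_\varnothing(t)^{-\omega_\Delta}\,\P(\A_{\X(t)}>r\mid\mathcal{G}(t))\bigr)$ ``$t$-independent'' is wrong: the event $\{\zeta_\varnothing>t\}$ shrinks with $t$, so the left-hand side $\mathcal{N}^-_0(\A>r,\zeta_\varnothing>t)$ genuinely depends on $t$; only its $t\to 0+$ limit is $t$-free. Second, if instead you take $r\to\infty$ at fixed $t$ and plug in the pointwise asymptotic $\P(\A_{\x}>r)\sim c\bigl(\sum_j x_j^{\omega_+}\bigr)r^{-\omega_+/\omega_-}$ inside the expectation, you would appear to obtain a decay of order $r^{-\omega_+/\omega_-}$, which is strictly faster than $r^{-\omega_\Delta/\omega_-}$. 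The resolution is precisely that this asymptotic is not uniform in $\x$ (the ``constant'' $\sum_j x_j^{\omega_+}$ is unbounded under $Q^+_0$), so limit and expectation cannot be exchanged; the true $r^{-\omega_\Delta/\omega_-}$ tail is driven by Eve reaching very high levels, which is exactly the level-crossing information the paper's decomposition isolates. Absent a proof of such uniformity, or a replacement argument, your step~2 does not close. I would recommend switching to the $h_\varepsilon$-decomposition, keeping your scaling step~1 if you wish as a consistency check.
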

\begin{proof} Write $h_{\varepsilon}=\inf\{t>0: X(t)=\varepsilon\}$ for the first hitting time of 
$\varepsilon >0$. It follows from  \eqref{defn-} (or more precisely its easy extension to stopping times) that $n^-_0(h_{\varepsilon}<\infty)=\varepsilon^{-\omega_{\Delta}}$. This enables us to define a conditional probability measure $n^-_0(\bullet \mid h_{\varepsilon}<\infty)= \varepsilon^{\omega_{\Delta}}n^-_0(\bullet, h_{\varepsilon}<\infty)$ under which the shifted process $(X(h_{\varepsilon}+t))_{t\geq 0}$ has the law $Q^-_{\varepsilon}$. 

Let us restrict the multiset in the smoothing transform under ${\mathcal N}^-_0$ that has been described just  before the statement to jumps that occur after time $h_{\epsilon}$ only, i.e. $\{\!\! \{|\Delta{X}(t)|^{\omega_-}: h_{\varepsilon}<t<\zeta\}\!\!\}$. This yields a variable 
denoted by $\A^{(\varepsilon)}$ with, by convention, $\A^{(\varepsilon)}=0$ on the event $\{h_{\epsilon}=\infty\}$. We see from the first paragraph of this proof that there is the identity
$${\mathcal N}^-_0(\A^{(\varepsilon)}>r)= \varepsilon^{-\omega_{\Delta}} 
{\mathcal Q}^-_{\varepsilon}(\A>r),$$
and then we conclude from Lemma \ref{LA} and the scaling property, that
$$\lim_{\varepsilon \to 0+}{\mathcal N}^-_0(\A^{(\varepsilon)}>r) = c r^{-\omega_{\Delta}/\omega_-}.$$
This entails our claim by monotone convergence, since obviously $\A^{(\varepsilon)}$ increases to $\A$ as $\varepsilon$ decreases to $0$, ${\mathcal N}^-_0$-a.s. 
\end{proof}

We now conclude this work by adapting Corollary \ref{C4} and constructing a regular version of the canonical measure ${\mathcal N}^-_0$ conditioned on having a given area $\A$. Specifically, for every $r>0$, we define 
the probability measure ${\mathcal N}^-_0(\bullet \mid  \A  =r)$ on the space of cell systems
as the image of the probability measure $ {\mathcal N}^-_0(\bullet \mid \A  >1)= c^{-1} {\mathcal N}^-_0(\bullet ,  \A  >1)$ 
by the rescaling map ${\mathcal X} \mapsto  {\mathcal X}^{(\beta)}$ with $\beta= (r/\A)^{1/\omega_-}$. 
Plainly, the intrinsic area computed for the rescaled cell system ${\mathcal X}^{(\beta)}$ equals $r$, and we can now justify our notation:

\begin{proposition} \label{P2} Assume $\alpha <0$. 
 We have
$${\mathcal N}^-_0\left(\bullet\right)= c \frac{\omega_{\Delta}}{\omega_-} \int_0^{\infty}
{\mathcal N}^-_0(\bullet \mid \A =r)  r^{-1+\omega_{\Delta}/\omega_-} \d r.$$
\end{proposition}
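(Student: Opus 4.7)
The plan is to test the claimed identity against an arbitrary nonnegative measurable functional $G$ of cell systems and reduce both sides to the same integral with respect to ${\mathcal N}^-_0$. The whole argument rests on a single structural ingredient: the scaling invariance of the canonical measure.

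First, I would establish that for every $b>0$ and every nonnegative functional $G$,
\[
\int G({\mathcal X}^{(b)})\,d{\mathcal N}^-_0({\mathcal X}) \;=\; b^{\omega_\Delta}\int G({\mathcal X})\,d{\mathcal N}^-_0({\mathcal X}).
\]
This is the cell-system analogue of the trajectory-level scaling \eqref{scale3}: under ${\mathcal X}\mapsto{\mathcal X}^{(b)}$, the law of the Eve trajectory $n^-_0$ becomes $b^{\omega_\Delta}n^-_0$, and simultaneously each daughter initial mass $x_i$ is sent to $bx_i$, which by \eqref{E:scalecellloi} matches precisely the transformation of the descendant laws ${\mathcal P}_{x_i}\mapsto {\mathcal P}_{bx_i}$; so the Crump--Mode--Jagers branching structure is preserved under the rescaling.

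Next, using the very definition of the conditional measure as the image of $c^{-1}{\mathcal N}^-_0(\bullet,\A>1)$ under ${\mathcal X}\mapsto{\mathcal X}^{(\beta)}$ with $\beta=(r/\A)^{1/\omega_-}$, I would expand the right-hand side and apply Tonelli to swap the $r$-integral with the ${\mathcal N}^-_0$-integral:
\[
\mathrm{RHS}=\frac{\omega_\Delta}{\omega_-}\int_{\{\A>1\}}\left(\int_0^\infty G\bigl({\mathcal X}^{((r/\A)^{1/\omega_-})}\bigr)\,r^{\,*}\,dr\right)d{\mathcal N}^-_0({\mathcal X}),
\]
where $r^*$ denotes the indicated power of $r$. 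The inner integral is then converted into an integral over the scaling parameter via $b=(r/\A)^{1/\omega_-}$, i.e.\ $r=\A b^{\omega_-}$, $dr=\A\omega_- b^{\omega_--1}db$; this produces $\int_0^\infty G({\mathcal X}^{(b)})\,\A^{\,\gamma}\,b^{\,\delta}\,db$ for explicit exponents $\gamma,\delta$ determined by $(\omega_-,\omega_\Delta)$.

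Now comes the heart of the argument. After another Tonelli swap, the ${\mathcal N}^-_0$-integral to be controlled has the shape
\[
\int_{\{\A>1\}}\A^{\gamma}\, G({\mathcal X}^{(b)})\,d{\mathcal N}^-_0({\mathcal X}),
\]
where both the weight and the indicator depend on ${\mathcal X}$ while $G$ is evaluated at ${\mathcal X}^{(b)}$. I would rewrite everything in terms of $\widetilde{\mathcal X}={\mathcal X}^{(b)}$ using $\A({\mathcal X})=b^{-\omega_-}\A(\widetilde{\mathcal X})$ and $\{\A({\mathcal X})>1\}=\{\A(\widetilde{\mathcal X})>b^{\omega_-}\}$, and then apply the scaling invariance from the first step to the whole integrand. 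This replaces the ${\mathcal X}^{(b)}$ inside $G$ by $\widetilde{\mathcal X}$ at the cost of an explicit power of $b$ and turns the integration region into $\{\A>b^{\omega_-}\}$. The remaining $b$-integral reduces, by Fubini, to $\int_0^{\A^{1/\omega_-}} b^{\omega_\Delta-1}\,db=\A^{\omega_\Delta/\omega_-}/\omega_\Delta$, which cancels the accumulated power of $\A$ and leaves exactly $\int G\,d{\mathcal N}^-_0={\mathcal N}^-_0(G)$, matching the left-hand side.

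The main obstacle is the first step: verifying the scaling invariance of ${\mathcal N}^-_0$ at the level of the full cell system rather than just at the level of the Eve trajectory. This requires tracking simultaneously the rescaling of $n^-_0$ provided by \eqref{scale3} and the rescaling of the descendant SSMP laws provided by \eqref{E:scalecellloi}, and checking that the branching construction intertwines them correctly. Once this is secured, the remainder is bookkeeping: the Fubini--Tonelli exchanges are unproblematic because all quantities are nonnegative, and the change of variable $r\leftrightarrow b$ is elementary.
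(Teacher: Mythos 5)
Your approach is correct and in its essence is the same as the paper's: the single load-bearing ingredient is the self-similarity of the canonical measure, namely that the distribution of ${\mathcal X}^{(b)}$ under ${\mathcal N}^-_0$ is $b^{\omega_{\Delta}}\,{\mathcal N}^-_0$, inherited from \eqref{scale3} and \eqref{E:scalecellloi} as you indicate. Where you differ is in how you exploit it. The paper uses scaling to show that ${\mathcal N}^-_0(\bullet\mid\A=r)$ is the image of ${\mathcal N}^-_0(\bullet\mid\A>R)$ under the map ${\mathcal X}\mapsto{\mathcal X}^{(\beta)}$ for \emph{every} $R>0$, deduces from this the independence of the normalized system ${\mathcal X}^{(\A^{-1/\omega_-})}$ and $\A$ under ${\mathcal N}^-_0(\bullet\mid\A>R)$, and then disintegrates over $\A$ using the explicit tail from Lemma \ref{L1} before letting $R\to0+$. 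You instead test the claimed identity against a nonnegative functional $G$, unwind the definition of the conditional measure, and push the computation through with two Tonelli swaps and a change of variables, using scaling to trade $G({\mathcal X}^{(b)})$ back for $G({\mathcal X})$. Your route is more hands-on and bypasses both the independence observation and the explicit form of the tail beyond the normalizing constant $c={\mathcal N}^-_0(\A>1)$; the paper's route extracts an intermediate fact (independence of the normalized system from the area under the tail-conditioned law) that is conceptually useful on its own, at the cost of a small limit passage. Both are sound. One small remark: the paper's displayed exponent $r^{-1+\omega_{\Delta}/\omega_-}$ is a misprint for $r^{-1-\omega_{\Delta}/\omega_-}$ (this is the density implied by Lemma \ref{L1}), and your final cancellation $\int_0^{\A^{1/\omega_-}}b^{\omega_{\Delta}-1}\,\mathrm{d}b=\A^{\omega_{\Delta}/\omega_-}/\omega_{\Delta}$ matching the accumulated power of $\A$ does in fact require the corrected exponent, so in a written-out version you would want to make this explicit.
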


\begin{proof} The canonical measure  inherits self-similarity from \eqref{scale3}; namely, for every $b>0$, there is the identity 
$$
\hbox{ the distribution of ${\mathcal X}^{(b)}$ under ${\mathcal N}^-_0$ is $b^{\omega_{\Delta}} {\mathcal N}^-_0$.
}
$$
This readily entails that for any $R>0$,  the image of the conditional probability measure $ {\mathcal N}^-_0(\bullet \mid \A  >R)= c^{-1} R^{\omega_{\Delta}/\omega_-}{\mathcal N}^-_0(\bullet ,  \A  >R)$ 
by the rescaling map ${\mathcal X} \mapsto  {\mathcal X}^{(\beta)}$ with $\beta= (r/\A)^{1/\omega_-}$,
is also given by ${\mathcal N}^-_0(\bullet \mid  \A  =r)$.  Hence, under $ {\mathcal N}^-_0(\bullet \mid \A  >1)$ as well as under $ {\mathcal N}^-_0(\bullet \mid \A  >R)$,  ${\mathcal X}^{(\beta)}$ is independent of $\A$. We thus have
$${\mathcal N}^-_0\left(\bullet, \A>R\right)= c \frac{\omega_{\Delta}}{\omega_-} \int_R^{\infty}
{\mathcal N}^-_0(\bullet \mid \A =r)  r^{-1+\omega_{\Delta}/\omega_-} \d r.$$
Since $R$ may be chosen as small as we wish, the proof is complete. 
\end{proof}

\bibliographystyle{abbrv}

\end{document}